\let\oldproofname=\proofname
\renewcommand{\proofname}{\rm\bf{\oldproofname}}
\newtheorem{theorem}{Theorem}
\newtheorem{lemma}[theorem]{Lemma}
\newtheorem{corollary}[theorem]{Corollary}
\newtheorem*{conjecture}{Conjecture}
\theoremstyle{definition}
\newtheorem*{defn}{Definition}
\newtheorem*{question}{Question}
\def \deg {{\rm deg}}
\def \Nbd {{\rm Nbd}}
\def \mod#1{{\:({\rm mod}\ #1)}}
\title{\bf Embedding partial Steiner triple systems with few triples}
\author{
Daniel Horsley\\
School of Mathematical Sciences \\
Monash University \\
Vic 3800, Australia \\[0.1cm]
\texttt{danhorsley@gmail.com}}
\date{ }
\begin{document}
\maketitle\thispagestyle{empty}
\def\baselinestretch{1.5}\small\normalsize

\begin{abstract}
It was proved in 2009 that any partial Steiner triple system of order $u$ has an embedding of
order $v$ for each admissible integer $v\geq 2u+1$. This result is best-possible in the sense
that, for each $u\geq 9$, there exists a partial Steiner triple system of order $u$ that does
not have an embedding of order $v$ for any $v<2u+1$. Many partial Steiner triple systems do have
embeddings of orders smaller than $2u+1$, but little has been proved about when these embeddings
exist. In this paper we construct embeddings of orders less than $2u+1$ for partial Steiner
triple systems with few triples. In particular, we show that a partial Steiner triple system of
order $u \geq 62$ with at most $\frac{u^2}{50}-\frac{11u}{100}-\frac{116}{75}$ triples has an
embedding of order $v$ for each admissible integer $v \geq \frac{8u+17}{5}$.
\end{abstract}

\section{Introduction}

A \emph{Steiner triple system of order $v$} is a pair $(V,\mathcal{B})$ where $V$ is a set with
$v$ elements and $\mathcal{B}$ is a set of (unordered) triples chosen from $V$ such that every
(unordered) pair of elements of $V$ is a subset of exactly one triple. In 1847, Kirkman \cite{Ki}
proved that there exists a Steiner triple system of order $v$ if and only if $v\equiv 1,3\mod{6}$.
Such integers are called {\it admissible}. A \emph{partial Steiner triple system of order $u$} is
a pair $(U,\mathcal{A})$ where $U$ is a set with $u$ elements and $\mathcal{A}$ is a set of
triples chosen from $U$ such that every pair of elements of $U$ is a subset of at most one triple.
The \emph{leave} of a partial Steiner triple system $(U,\mathcal{A})$ is the graph $L$ with vertex
set $U$ and edge set given by $xy\in E(L)$ if and only if the pair $\{x,y\}$ occurs in no triple
in $\mathcal{A}$. An \emph{embedding} of a partial Steiner triple system $(U,\mathcal{A})$ is a
(complete) Steiner triple system $(V,\mathcal{B})$ such that $U\subseteq V$ and
$\mathcal{A}\subseteq \mathcal{B}$. If $V=U$ the embedding is a \emph{completion}. The
\emph{embedding spectrum} of a partial Steiner triple system is the set of all orders for which it
has an embedding. A \emph{triangle decomposition} of a graph $G$ is a set of triangles whose edge
sets partition the edge set of $G$. A Steiner triple system of order $v$ can be considered as a
triangle decomposition of a complete graph of order $v$.

In 1977 Lindner \cite{LiEv} conjectured that any partial Steiner triple system of order $u$ has an
embedding of order $v$ for each $v\geq 2u+1$ such that $v \equiv 1,3 \mod{6}$. This is
best-possible in the sense that, for each $u\geq 9$, there exists a partial Steiner triple system
of order $u$ that does not have an embedding of order $v$ for any $v<2u+1$ \cite{CoRo}. Most
existing results on embedding partial Steiner triple systems concern embeddings of order at least
$2u+1$. In particular, a series of results \cite{Tr,Li,AnHiMe,Br3uEmbed} guaranteed the existence
of progressively smaller embeddings, culminating in a complete proof of Lindner's conjecture in
\cite{BrHoEmbed}.

\begin{theorem}[\cite{BrHoEmbed}]\label{LindnerTh}
Any partial Steiner triple system of order $u$ has an embedding of order $v$ for each $v\geq 2u+1$
such that $v \equiv 1, 3 \mod{6}$.
\end{theorem}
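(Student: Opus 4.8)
The plan is to recast the statement as a graph decomposition and then solve it by routing the edges of the leave through the new points. Fix a partial Steiner triple system $(U,\mathcal{A})$ of order $u$ with leave $L$ and an admissible $v\ge 2u+1$; put $w=v-u\ge u+1$, let $W$ be a set of $w$ new points, and write $V=U\cup W$. An embedding of order $v$ is precisely a triangle decomposition of the graph $G$ obtained from $K_V$ by deleting the $3|\mathcal{A}|$ edges covered by $\mathcal{A}$; here $G[U]=L$, $G[W]=K_w$, and the bipartite part of $G$ between $U$ and $W$ is complete. First I would reduce to the case that $v$ is not much larger than $2u+1$: once $v$ is admissible with $v\ge 4u+7$, one can embed $(U,\mathcal{A})$ into a Steiner triple system of order equal to the least admissible integer exceeding $2u$ (which is $2u+1$ or $2u+3$, so is covered by the small-$v$ case), and then embed that system into one of order $v$ by the Doyen--Wilson theorem on embeddings of complete Steiner triple systems. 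So it suffices to treat $v$ in a bounded range, say $2u+1\le v\le 4u+7$.

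Next I would record the counting and parity restrictions that pin down the shape of the construction. Classifying the triangles of the sought decomposition by how many of their vertices lie in $W$ --- types $UUU$, $UUW$, $UWW$, $WWW$, with multiplicities $a,b,c,d$ --- and counting the edges of $G$ inside $U$, across $U$--$W$, and inside $W$ gives $3a+b=|E(L)|$, $b+c=uw/2$, and $c+3d=\binom{w}{2}$. These are solvable in nonnegative integers whenever $v$ is admissible (the divisibility conditions follow from $\binom{v}{2}\equiv 0\pmod 3$); and at $v=2u+1$, where $uw/2=\binom{w}{2}=\tfrac12 u(u+1)$, the relations collapse to $b=3d$ with $3\mid|E(L)|$ automatic, leaving essentially no freedom: up to a bounded number of triangles inside $U$, every edge of $L$ must be completed by a point of $W$.

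The core of the argument is then to build an edge-disjoint family $\mathcal{C}$ of triangles, all of type $UUU$ or $UUW$ and using only a bounded number of the former, whose union contains every edge of $L$, chosen so that (i) each point of $W$ is used about $|E(L)|/w$ times, (ii) no $U$--$W$ edge is used twice, and (iii) the residual bipartite graph $H$ between $U$ and $W$ --- that is, the complete bipartite graph with the edges used by $\mathcal{C}$ deleted --- is near-regular with even degrees on the $U$-side. After $\mathcal{C}$ is placed, what remains is a triangle decomposition of $H\cup K_w$, which I would obtain by a Doyen--Wilson--style construction: for each $x\in U$, pair the edges at $x$ in $H$ into cherries $\{x,z_1,z_2\}$, absorb each resulting edge $z_1z_2$ into a triangle decomposition of $K_w$, and arrange that the edges of $K_w$ consumed in this way form a subgraph whose complement in $K_w$ still decomposes into triangles. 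The abundance of choices for $\mathcal{C}$ (large because $w\ge u+1$) is what allows (i)--(iii) and this $K_w$-bookkeeping to be met simultaneously.

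The hard part is making all of this work in the extremal regime, namely when the leave is dense: its vertices then have degree near $u-1$, at $v=2u+1$ the edge counts above have no slack, and the residual bipartite degree $w-d_L(x)$ at a vertex $x$ can be forced down to $2$, so $\mathcal{C}$ must be routed with almost no margin and its residual graph must land exactly in the class the $K_w$-decomposition can handle. One must also cope with the parity corrections forced when $w-d_L(x)$ is odd for many $x$ (resolved with a few within-$U$ or within-$W$ triangles and local exchanges of triples), and, for the handful of smallest admissible values of $v$, with direct tailored constructions. Reserving a small reservoir of points inside $W$ to absorb irregularities, together with known Doyen--Wilson and group-divisible-design results for the bipartite-plus-clique leftover, is what pushes the argument through --- and the examples of \cite{CoRo} show the threshold $2u+1$ cannot be lowered in general.
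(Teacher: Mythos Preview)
The paper does not prove this theorem. Theorem~\ref{LindnerTh} is stated as a background result, attributed to \cite{BrHoEmbed}, and no proof is given here; the present paper instead proves the new Theorems~\ref{MainTheorem} and~\ref{NWTheorem} via Lemma~\ref{singleEmbedding}. So there is no ``paper's own proof'' to compare against.

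As to the proposal itself: what you have written is a plausible \emph{outline} of the shape an argument might take, not a proof. The reduction to a bounded range via Doyen--Wilson is standard and correct in spirit. The type-counting $3a+b=|E(L)|$, $2b+2c=uw$, $c+3d=\binom{w}{2}$ is also fine. But from that point on your plan consists of assertions that certain objects can be chosen (``build $\mathcal{C}$ with properties (i)--(iii)'', ``arrange that the edges of $K_w$ consumed form a subgraph whose complement still decomposes into triangles'') without any mechanism for actually producing them. You yourself flag the extremal regime as ``the hard part'', and that is exactly right: when the leave is dense and $v$ is at the threshold $2u+1$, the residual bipartite degrees $w-\deg_L(x)$ can be as small as $2$, and routing $\mathcal{C}$ so that the leftover graph is decomposable is genuinely delicate. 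The published proof in \cite{BrHoEmbed} handles precisely this regime with an intricate combination of edge-colouring arguments and repeated path-switching (of the kind encapsulated in Lemma~\ref{PathSwitchGeneral} of the present paper), and none of that machinery is present in your sketch. Invoking ``a Doyen--Wilson--style construction'' for $H\cup K_w$ is also not enough: Doyen--Wilson treats $K_u\vee K_w$, not an arbitrary near-regular bipartite graph joined to $K_w$, and extending it is itself nontrivial. In short, the proposal identifies the right variables and the right bottleneck but supplies none of the actual work needed to get past it.
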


Of course, many partial Steiner triple systems do have embeddings of order less than $2u+1$. Much
less is known about when these embeddings exist and they have only been constructed for limited
classes of partial Steiner triple systems. Colbourn, Colbourn and Rosa \cite{CoCoRo} showed that
there is a completion of every partial Steiner triple system of order $u$ which has exactly $u-2$
triples and the property that every triple contains at least one of two specified elements. Bryant
\cite{Br1} gave necessary and sufficient conditions for a partial Steiner triple system of order
$u$ every vertex of whose leave has degree $d$ or $0$ to have an embedding of order $u+d$, and
found the complete embedding spectrum in the case $d=2$. Bryant, Maenhaut, Quinn and Webb
\cite{BrMaQuWe} found the embedding spectra of all partial Steiner triple systems of order $10$
with $3$-regular leaves.  The main result of \cite{BrHoSubsys} determines the embedding spectra of
partial Steiner triple systems whose leaves are complete bipartite graphs. Emphasising the
difficulty of this problem in general, Colbourn \cite{Co} has shown that it is NP-complete to
decide whether a partial Steiner triple system has an embedding of order less than $2u+1$.
Recently, numerous embeddings of order less than $2u+1$ have been constructed for Steiner triple
systems whose leaves have few edges and low maximum degree \cite{HoEmbed}.


This paper focusses on embeddings of partial Steiner triple systems with few triples. We show that
every partial Steiner triple system of order at least 62 which satisfies an upper bound on its
number of triples has an embedding of order $v$ for each admissible integer $v \geq
\frac{8u+17}{5}$.

\begin{theorem}\label{MainTheorem}
Any partial Steiner triple system of order $u \geq 62$ with at most
$\frac{u^2}{50}-\frac{11u}{100}-\frac{116}{75}$ triples has an embedding of order $v$ for each
integer $v \geq \frac{8u+17}{5}$ such that $v \equiv 1, 3 \mod{6}$.
\end{theorem}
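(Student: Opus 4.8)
The plan is to reduce to decomposing a structured graph and to build that decomposition in four layers. By Theorem~\ref{LindnerTh} we may assume $v\le 2u-1$, so writing $V=U\cup W$ with $|W|=w:=v-u$ we have $\tfrac{3u+17}{5}\le w\le u-1$. Let $L$ be the leave of $(U,\mathcal{A})$. Since $\mathcal{A}\subseteq\mathcal{B}$ whenever $(V,\mathcal{B})$ is an embedding, it suffices to exhibit a triangle decomposition of the graph $G$ on $V$ with $G[U]=L$, $G[U,W]=K_{U,W}$ and $G[W]=K_W$: the triangles of such a decomposition together with $\mathcal{A}$ form the required Steiner triple system. Because $v\equiv 1,3\pmod 6$ and each triple of $\mathcal{A}$ deletes a triangle, $G$ is triangle-divisible. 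Note that $|E(L)|=\binom{u}{2}-3|\mathcal{A}|$ is large, so $L$ is dense, while a vertex $x\in U$ lying in $t_x$ triples of $\mathcal{A}$ has $\deg_L(x)=u-1-2t_x$ and $\sum_{x\in U}t_x=3|\mathcal{A}|$.

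The decomposition uses triangles of four types: (i) triangles inside $W$, decomposing a subgraph $K_W\setminus H$ for a suitable auxiliary graph $H\subseteq K_W$; (ii) triangles $\{x,p,q\}$ with $x\in U$ and $pq\in E(H)$; (iii) triangles $\{x,y,p\}$ with $xy\in E(L)$ and $p\in W$; and (iv) triangles inside $U$, i.e.\ triangles of $L$. Counting edges at each vertex forces, for $x\in U$, that $x$ lies in some $b_x$ triangles of type (ii), in $c_x=w-2b_x$ of type (iii), and in $d_x=\tfrac12(\deg_L(x)-w)+b_x$ of type (iv), the relevant quantities being integral because $v$ is odd; counting edges at each $p\in W$ forces the type-(iii) triangles through $p$ to form a matching in $L$ of size $\tfrac12(u-\deg_H(p))$. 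So the first step is to choose $H$ and to peel off a partial triangle decomposition of $L$ realising the targets $d_x$, leaving a residual graph $L^*$ on $U$ with $\deg_{L^*}(x)=c_x\le w$. The graph $H$ absorbs the vertices with $\deg_L(x)<w$ (equivalently large $t_x$): for such $x$ we must take $b_x\ge\tfrac12(w-\deg_L(x))$, and since $\sum_x b_x=|E(H)|$ this forces $|E(H)|\ge\tfrac12\sum_{x\in U}\max\{0,w-\deg_L(x)\}$, a quantity at most $3|\mathcal{A}|$ and hence, by the hypothesis on $|\mathcal{A}|$, comfortably below $\binom{w}{2}$. One then needs: (a) $K_W\setminus H$ triangle-divisible and triangle-decomposable, arranged by taking $H$ sparse and near-regular and using known results on triangle decompositions of dense graphs; (b) a triangulation of $H$ by vertices of $U$, one triangle per edge of $H$, with prescribed multiplicities $b_x$ on the $U$-side and using each edge of $K_{U,W}$ at most once; and (c) a proper edge colouring of $L^*$ with colour set $W$ in which the colour class at $p$ is a matching of size $\tfrac12(u-\deg_H(p))$, which supplies the type-(iii) triangles and consumes the remaining edges of $K_{U,W}$.

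I expect the crux to be coordinating these choices so that all of their divisibility and degree constraints hold simultaneously. In particular: (1) the dense but possibly irregular leave $L$ must contain a partial triangle decomposition meeting the targets $d_x$ while leaving a residual $L^*$ that is still edge-colourable into matchings of the prescribed sizes — here one exploits that $L$ is so dense that through every vertex of even moderate leave-degree there are many edge-disjoint triangles, and that the partial-system condition prevents low-leave-degree vertices from clustering; (2) the target degrees $c_x$, the degree sequence of $H$, and the sizes of the matching colour classes must be chosen mutually consistently, a feasibility condition of flow type that the bound on $|\mathcal{A}|$ renders slack; and (3) each subproblem must be shown solvable, the type-(i) piece via a dense-graph triangle-decomposition result and the type-(ii) and type-(iii) pieces via (constrained) edge-colouring arguments. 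The tightest case throughout is the smallest admissible $v$, where $w$ is nearest $\tfrac{3u}{5}$ and every counting margin is smallest; tracking these margins is precisely what yields the explicit bound $\tfrac{u^2}{50}-\tfrac{11u}{100}-\tfrac{116}{75}$. An alternative route is to peel off enough of the dense leave to be left with a partial Steiner triple system whose leave has few edges and low maximum degree and then invoke the small-embedding results of \cite{HoEmbed}; determining how much can be peeled off, and with what residual degree profile, is essentially the same analysis.
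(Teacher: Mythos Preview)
Your four-type framework is the right picture, but what you have written is a plan rather than a proof: you name the coordination problems (1)--(3) and then stop. Two specific moves in the paper convert the plan into an argument, and you are missing both.

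First, the paper does not handle arbitrary $w\in[\tfrac{3u+17}{5},u-1]$. It proves the decomposition only for the extremal $w=\tfrac{3u+k}{5}$, $k\in\{17,19,21,23\}$ (Lemma~\ref{singleEmbedding}); for larger $v$ it first replaces $L$ by $L'=L\vee K_{u'-u}$ with $u'=\tfrac{5v-k}{8}$ chosen so that $v-u'$ has that form, and then applies the lemma to $L'$. This collapses your whole range of $w$ to a single case and is where the constant $\tfrac{u^2}{50}-\tfrac{11u}{100}-\tfrac{116}{75}$ actually comes from. Second, in that extremal case the paper takes, in your language, $H=K_W$: it peels a triangle packing off $L$ so that the residual $L^*$ has exactly $\tfrac{w(u-w+1)}{2}$ edges and $\Delta(L^*)\le w-8$ (Lemma~\ref{makeLeaveSparse}), whereupon a count forces every triangle of a decomposition of $L^*\vee K_w$ to meet both $U$ and $W$. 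This removes your step~(a) altogether; as you stated it, (a) would require a Nash-Williams-type triangle-decomposition result for $K_W\setminus H$ that is not available and is essentially what Theorem~\ref{NWTheorem} itself is about. The remaining coordination of types~(ii) and~(iii) is not done by a one-shot edge-colouring or flow argument either: $L^*$ is split into $w-7$ matchings plus a small even-component subgraph $F$ (Lemma~\ref{singleSparseLeaveHelper}), these seed a cycle packing of $L^*\vee K_T$ with $|T|=w-5$, a path-switching lemma (Lemma~\ref{getTriangle}) is applied $\binom{w-5}{2}$ times to absorb the edges of $K_T$ into type-(ii) triangles one at a time, and a black-box lemma from~\cite{HoEmbed} (Lemma~\ref{finishOff}) supplies the last five vertices of $W$. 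Your closing ``alternative route'' is close to this, but the analysis you defer there is precisely the substantive content of the proof.
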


The bound of $\frac{8u+17}{5}$ is an artefact of the techniques applied in constructing the
embeddings. The lower bound on $u$ could be reduced, but at the expense of further complicating
the proof. As a complement to this result, it follows from Corollary \ref{LBCor} that for
sufficiently large values of $u$ there is a partial Steiner triple system of order $u$ with at
most $\frac{u^2}{50}-\frac{11u}{100}-\frac{116}{75}$ triples that does not have an embedding of
order $v$ for any $v < (1.346)u$.

In fact, we will prove a result stronger than Theorem \ref{MainTheorem} concerning triangle
decompositions of graphs of order $v$ having many vertices of degree $v-1$.

\begin{theorem}\label{NWTheorem}
Let $G$ be a graph such that every vertex of $G$ has even degree and $|E(G)| \equiv 0 \mod{3}$. If
$G$ has order $v \geq 103$, $|E(G)| \geq
\binom{v}{2}-(\frac{3v^2}{128}-\frac{27v}{64}-\frac{409}{128})$ and at least $\frac{3v+17}{8}$
vertices of $G$ have degree $v-1$, then there is a triangle decomposition of $G$.
\end{theorem}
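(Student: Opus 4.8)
The plan is to pass to the complement, remove a small partial triangle decomposition that tames the few ``difficult'' vertices, and then finish with a known decomposition theorem. Let $W$ be a set of at least $\frac{3v+17}{8}$ vertices of degree $v-1$, put $U=V\setminus W$, and let $H=\overline G$; then $E(H)\subseteq\binom U2$, $|E(H)|\le\frac{3v^2}{128}-\frac{27v}{64}-\frac{409}{128}$, every vertex of $W$ is adjacent to every other vertex, $v$ is odd (some vertex has the even degree $v-1$), and $|E(G)|\equiv0\mod3$. We must triangle-decompose $G=K_v-H$. The key difference from the ``low-maximum-degree leave'' results of \cite{HoEmbed} is that $H$ may have vertices of large degree --- these correspond, in the application, to elements lying in many triples --- so the main task is to reduce to a situation in which $\Delta(H)$ is controlled; the abundance of universal vertices in $W$ provides the room to do this.

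\emph{Step 1 (taming high-degree vertices of $H$).} A vertex $x\in U$ with $d_H(x)$ large has small $G$-degree $d_G(x)=v-1-d_H(x)$, and there are at most $2|E(H)|/t$ vertices with $d_H$ above any chosen threshold $t$. Process them one at a time: for such an $x$, remove a set of $\tfrac12 d_G(x)$ triangles through $x$ whose other two vertices run over a perfect matching of $G[N_G(x)]$. Such a matching exists because $d_G(x)$ is even and $G[N_G(x)]$ is dense --- its non-edges all lie in $H$ together with the few edges removed so far --- provided the vertices are processed in a suitable order. Each such deletion removes a multiple of $3$ edges and preserves evenness of all degrees, and it enlarges the deficiency graph by at most $\tfrac12 d_G(x)$ edges. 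After handling all of them we are left with $G^{*}=K_{v^{*}}-H^{*}$ on $v^{*}\le v$ vertices, with $\Delta(H^{*})\le t$, with $|E(H^{*})|$ still bounded by a controlled multiple of $|E(H)|$, with all degrees even and $|E(G^{*})|\equiv0\mod3$, and with enough vertices of full degree remaining.

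\emph{Step 2 (finishing).} For $t$ chosen in the appropriate range, $G^{*}$ is a complete graph minus a graph of small maximum degree and few edges that meets the two divisibility conditions, so it has a triangle decomposition by the results of \cite{HoEmbed}. (Alternatively, when $v\equiv1,3\mod6$ one may spend a few triangles with apices in $W$ to clear the $H^{*}$-free pairs inside $U$ and rearrange the non-universal part into a hole of admissible size at most $\frac{v^{*}-1}{2}$, and then apply Theorem~\ref{LindnerTh}.) Adjoining the triangles deleted in Step 1 gives the desired decomposition of $G$.

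\emph{Main obstacle.} Essentially all the work is in Step 1, and in checking that its output really does satisfy the hypotheses required in Step 2. One must pick $t$ so that simultaneously $\Delta(H^{*})\le t$ is small enough for the finishing step, $|E(H^{*})|$ stays below the quadratic bound it needs, enough full-degree vertices survive, and every perfect matching invoked genuinely exists all the way through the order-sensitive stripping process; ``$t$ large'' is good for the first three and ``$t$ small'' for the fourth, so $t$ is pinned to a narrow window. It is precisely the hypotheses $v\ge103$ and $|E(H)|\le\frac{3v^2}{128}-\frac{27v}{64}-\frac{409}{128}$ that leave just enough slack for this window to be nonempty; non-admissibility of a would-be hole and residual parity defects are absorbed with a bounded number of further apex triangles. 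With all of this in hand the final appeal to \cite{HoEmbed} (or to Theorem~\ref{LindnerTh}) is routine.
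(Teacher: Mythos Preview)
Your proposal is a sketch with a genuine gap at Step~2. The appeal to ``the results of \cite{HoEmbed}'' does not go through: that paper constructs small embeddings for partial Steiner triple systems whose \emph{leaves have few edges and low maximum degree}. After your taming step, $G^{*}$ still has many universal vertices and so $G^{*}=L^{**}\vee K_{w^{**}}$, where $L^{**}$ is the subgraph induced on the non-universal vertices. But $L^{**}$ is \emph{dense}: it is $K_{u^{**}}$ minus a piece of $H^{*}$, with $u^{**}$ as large as roughly $\tfrac{5v}{8}$ and the piece of $H^{*}$ having only $O(v^{2}/128)$ edges. This is precisely the regime \cite{HoEmbed} does \emph{not} cover, and it is exactly the regime for which the present paper builds new machinery in Section~\ref{constructionSection}. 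Your Lindner alternative fails for the same reason: Theorem~\ref{LindnerTh} needs the hole to have at most $\tfrac{v^{*}-1}{2}$ vertices, while the non-universal set can have about $\tfrac{5v}{8}>\tfrac{v}{2}$ of them, and no ``bounded number of further apex triangles'' can shrink a set by $\Theta(v)$ vertices. The numerical ``narrow window'' you describe for $t$ is asserted but never exhibited; in fact, balancing $\Delta(H^{*})\le t+(\hbox{number processed})$ forces $\Delta(H^{*})$ to be of order $\sqrt{|E(H)|}\sim v/5$, which is not small enough for any black-box finish that I can identify.

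The paper's proof is quite different and avoids these issues. It writes $G=L'\vee K_{w'}$ with $u'=\tfrac{5v-k}{8}$ and $w'=\tfrac{3v+k}{8}$ for the appropriate $k\in\{17,19,21,23\}$, checks that $(L',w')$ is admissible and that the edge hypothesis on $G$ translates to $|E(L')|\ge\binom{u'}{2}-\tfrac{w'(u'-w'+1)-u'-2}{4}$, and then invokes Lemma~\ref{singleEmbedding}. The degree reduction you attempt via perfect matchings through bad vertices is done instead entirely \emph{inside} $L'$ (Lemma~\ref{makeLeaveSparse}): start from a near-optimal triangle packing of $K_{U}$, discard any triangle using an edge of $H$, then swap triangles in and out until the leave $L^{*}$ has exactly $\tfrac{w'(u'-w'+1)}{2}$ edges and $\Delta(L^{*})\le w'-8$. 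The finish (Lemma~\ref{singleSparseLeaveEmbedding}) is a new construction: edge-colour $L^{*}$, build an initial cycle packing of $L^{*}\vee K_{w'-5}$, repeatedly apply the switching Lemma~\ref{getTriangle}, and close with the technical Lemma~\ref{finishOff} from \cite{HoEmbed}. Your Step~1 has the right instinct, but both its execution and the finishing argument require the ideas developed here rather than a citation.
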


It is worth noting that this result settles the following conjecture, due to Nash-Williams
\cite{Nas}, in the case of graphs of this very specific form (it also includes some cases not
covered by the conjecture).

\begin{conjecture}[\cite{Nas}]\label{NWConj}
Let $G$ be a graph such that every vertex of $G$ has even degree and $|E(G)| \equiv 0 \mod{3}$. If
$G$ has minimum degree at least $\frac{3}{4}|V(G)|$, then there is a triangle decomposition of
$G$.
\end{conjecture}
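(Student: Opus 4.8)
The plan is to attack the conjecture through the now-standard combination of a fractional relaxation with iterative absorption, in the style developed by Barber, K\"uhn, Lo and Osthus for $F$-decompositions of dense graphs. The key reduction is to the \emph{fractional triangle decomposition threshold} $\delta^*_3$, defined as the infimum of those $\delta$ for which every $n$-vertex graph $G$ with minimum degree $\delta(G) \geq \delta n$ admits a fractional triangle decomposition, that is, a nonnegative weighting $(w_T)$ of the triangles of $G$ with $\sum_{T \ni e} w_T = 1$ for every edge $e$. The absorption machinery converts such a fractional decomposition into a genuine one at the cost of a vanishing additive loss in the degree condition: granting $\delta^*_3 \leq c$, one obtains that for all large $n$ every graph satisfying the divisibility hypotheses (even degrees and $|E(G)|$ divisible by $3$) with $\delta(G) \geq (c + o(1))n$ has an exact triangle decomposition. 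Since the known extremal constructions force $\delta^*_3 \geq \tfrac34$, the conjecture would follow for all sufficiently large $n$ from the sharp equality $\delta^*_3 = \tfrac34$, with the finitely many remaining orders to be dispatched separately.

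Concretely I would proceed as follows. First, reserve a small random absorbing structure that is itself robustly decomposable into triangles and can swallow an arbitrary sparse divisible remainder. Second, run a semi-random nibble, or the iterative-absorption scheme, on the bulk of $G$ to extract an almost-perfect triangle packing whose uncovered graph $H$ has very small maximum degree while remaining divisible. Third, and crucially, maintain throughout the process the invariant that the current leftover still admits a fractional triangle decomposition, so that the reserved absorber can complete the decomposition; it is exactly here that $\delta^*_3$ is invoked, and exactly here that the whole scheme is bottlenecked. The parity and divisibility hypotheses are precisely what prevent local obstructions from surviving into the final cleanup.

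The hard part will be establishing $\delta^*_3 = \tfrac34$; this is, indeed, the entire content of the conjecture. The absorption apparatus is by now effectively a black box transferring a fractional result to an integral one, so what is missing is a proof that minimum degree $\tfrac34 n$ already forces a fractional triangle decomposition. The natural route is linear-programming duality: by Farkas' lemma, $G$ has no fractional triangle decomposition precisely when there is a real edge-weighting $(y_e)$ with $\sum_e y_e > 0$ yet $\sum_{e \in T} y_e \leq 0$ for every triangle $T$, and one would aim to rule this configuration out under $\delta(G) \geq \tfrac34 n$ by an averaging argument over the triangles through an edge of large weight. Existing probabilistic and flow-based techniques only reach thresholds bounded away from $\tfrac34$, and closing this gap down to the value where the extremal examples actually live is the principal obstacle, and the reason the conjecture remains open in general. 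The present paper sidesteps this difficulty by restricting to graphs with very many vertices of full degree $v-1$, for which Theorem~\ref{NWTheorem} builds the required decomposition directly rather than through the fractional threshold.
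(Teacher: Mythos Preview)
This statement is recorded in the paper as a \emph{conjecture}, not as a theorem; the paper does not prove it and does not claim to. What the paper actually establishes is the much more restricted Theorem~\ref{NWTheorem}, which requires, in addition to the divisibility hypotheses, that at least $\tfrac{3v+17}{8}$ vertices have full degree $v-1$ and that $|E(G)|$ exceed a specific quadratic bound. So there is no ``paper's own proof'' of Conjecture~\ref{NWConj} to compare against.

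Your proposal, in turn, is not a proof either, and you say as much. The iterative-absorption reduction you describe is correct as far as it goes, but it transfers the entire burden to the equality $\delta^*_3 = \tfrac34$, which is open; the best known upper bounds on the fractional triangle-decomposition threshold are still strictly above $\tfrac34$. Thus what you have written is a reduction from one open problem to another, plus a candid acknowledgement that the target problem is the whole difficulty. That is an accurate summary of the state of the art, but it is not a proof of the conjecture, and it is not what the paper does. The paper's contribution here is orthogonal: rather than passing through any fractional relaxation, it exploits the large supply of full-degree vertices to write $G$ as $L' \vee K_{w'}$ and then builds an explicit triangle decomposition of that join via Lemma~\ref{singleEmbedding}. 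If your intent was to prove Conjecture~\ref{NWConj}, the genuine gap is exactly the one you name: establishing $\delta^*_3 = \tfrac34$.
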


\section{Background and Notation}\label{BackgroundAndNotation}

In this section we introduce some terminology and concepts which we will need in later sections.

All graphs considered in this paper are simple and loopless. For a nonnegative integer $v$, a
complete graph of order $v$ will be denoted by $K_v$ and, for a given set $V$, the complete graph
on vertex set $V$ will be denoted by $K_V$. For nonnegative integers $u$ and $w$, a complete
bipartite graph with parts of size $u$ and $w$ will be denoted by $K_{u,w}$ and, for disjoint sets
$U$ and $W$, the complete bipartite graph with parts $U$ and $W$ will be denoted by $K_{U,W}$. For
graphs $G$ and $H$, we denote by $G \cup H$ the graph with vertex set $V(G) \cup V(H)$ and edge
set $E(G) \cup E(H)$, we denote by $G \cap H$ the graph with vertex set $V(G) \cap V(H)$ and edge
set $E(G) \cap E(H)$, we denote by $G-H$ the graph with vertex set $V(G)$ and edge set $E(G)
\setminus E(H)$, and, if $V(G)$ and $V(H)$ are disjoint, we denote by $G \vee H$ the graph $G\cup
H \cup K_{V(G),V(H)}$ (our use of this last notation will imply that $V(G)$ and $V(H)$ are
disjoint).

For a graph $G$ and a vertex $x$ of $G$, the set of vertices adjacent in $G$ to $x$ is denoted by
$\Nbd_G(x)$, the degree in $G$ of $x$ is denoted by $\deg_G(x)$, and the maximum degree of $G$ is
denoted by $\Delta(G)$. A graph $G$ is \emph{even} if $\deg_G(x)$ is even for each $x \in V(G)$,
and is \emph{$r$-regular} if $\deg_G(x)=r$ for each $x \in V(G)$. A \emph{matching} is a
$1$-regular graph, a cycle is a connected $2$-regular graph, and a \emph{path} is a connected
graph with two vertices of degree 1 and every other vertex of degree 2 (we allow trivial matchings
with no vertices or edges but demand that cycles and paths have at least two vertices). The
\emph{length} of a cycle or path is the number of edges it has. We let $(x_1,\ldots,x_p)$ denote
the cycle with vertex set $\{x_1,\ldots,x_p\}$ and edge set
$\{x_1x_2,x_2x_3,\ldots,x_{p-1}x_p,x_px_1\}$. A cycle of length $3$ is called a \emph{triangle}.

A \emph{decomposition} of a graph $G$ is a set of subgraphs of $G$ whose edge sets partition the
edge set of $G$. If every graph in a decomposition is a cycle we call the decomposition a
\emph{cycle decomposition} and if every graph in a decomposition is a triangle we call the
decomposition a \emph{triangle decomposition}. A \emph{packing} of a graph $G$ is a decomposition
of a subgraph $H$ of $G$, and the graph $G-H$ is the \emph{leave} of the packing. Note that if $L$
is the leave of a cycle packing of a graph $G$, then $\deg_L(x) \equiv \deg_G(x) \mod{2}$ for each
$x \in V(G)$. It is clear that a Steiner triple system of order $v$ can be considered as a
triangle decomposition of $K_v$ and that a partial Steiner triple system of order $u$ can be
considered as a triangle packing of $K_u$. If $(U,\mathcal{A})$ is a partial Steiner triple system
of order $u$ and $L$ is the leave of $(U,\mathcal{A})$, then an embedding of $(U,\mathcal{A})$ of
order $u+w$ can be considered as a triangle decomposition of $L \vee K_w$.

A {\it proper edge colouring} of a graph $G$ with $c$ colours can be considered as a decomposition
of $G$ into $c$ matchings. The minimum number of matchings in any such decomposition of $G$ is
called the \emph{chromatic index} of $G$ and is denoted by $\chi'(G)$. Vizing \cite{Vi} proved
that $\chi'(G) \in \{\Delta(G),\Delta(G)+1$\} for any graph $G$.


\begin{defn}
If $L$ is a graph of order $u$ and $w$ is a nonnegative integer we will say that $(L,w)$ is
\emph{admissible} if
\begin{itemize}
    \item
$u+w$ is odd;
    \item
$\deg_L(x) \equiv u+1 \mod{2}$ for each $x \in V(L)$; and
    \item
$|E(L)|+uw+\frac{w(w-1)}{2} \equiv 0 \mod{3}$.
\end{itemize}
\end{defn}
When $L$ is the leave of a partial Steiner triple system, $(L,w)$ is admissible if and only if
$u+w \equiv 1, 3 \mod{6}$. The following lemma gives some well-known necessary conditions for the
existence of an embedding of a partial Steiner triple system.
\begin{lemma}[\cite{Br1}]\label{NecConds}
Let $L$ be a graph and let $w$ be a nonnegative integer such that there is a triangle
decomposition of $L \vee K_w$. Then $(L,w)$ is admissible and there is a subgraph $L'$ of $L$ such
that
\begin{itemize}
    \item
there is a triangle decomposition of $L-L'$;
    \item
$\chi'(L') \leq w$; and
    \item
$|E(L')| \geq \frac{w(u-w+1)}{2}$.
\end{itemize}
\end{lemma}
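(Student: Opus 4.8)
Fix a triangle decomposition $\mathcal{T}$ of $L \vee K_w$, write $U = V(L)$ (so $|U| = u$), and let $W$ be the vertex set of the $K_w$ part, so that $U \cap W = \emptyset$ and $L \vee K_w = L \cup K_W \cup K_{U,W}$. The plan is to classify the triangles of $\mathcal{T}$ by how many of their three vertices lie in $U$: for $i \in \{0,1,2,3\}$ let $\mathcal{T}_i$ be the set of triangles of $\mathcal{T}$ with exactly $i$ vertices in $U$. First I would dispatch admissibility using the standard parity and divisibility constraints on a graph that admits a triangle decomposition, namely that every vertex has even degree and the total number of edges is divisible by $3$. Each vertex of $W$ has degree $u+w-1$ in $L \vee K_w$, forcing $u+w$ odd; each vertex $x$ of $U$ has degree $\deg_L(x) + w$, forcing $\deg_L(x) \equiv w \equiv u+1 \pmod 2$; and $|E(L \vee K_w)| = |E(L)| + uw + \binom{w}{2}$ must be divisible by $3$, giving the third condition.

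For the subgraph $L'$, observe that the only edges of $L \vee K_w$ with both ends in $U$ are the edges of $L$, so every edge of $L$ lies in a triangle of $\mathcal{T}_3 \cup \mathcal{T}_2$. I would take $L'$ to be the spanning subgraph of $L$ whose edges are precisely those lying in triangles of $\mathcal{T}_2$. Then $\mathcal{T}_3$ is a triangle decomposition of $L - L'$, since each triangle of $\mathcal{T}_3$ has all three edges in $L$ and none of them in $L'$, and these triangles cover exactly the edges of $L$ not used by $\mathcal{T}_2$. For the chromatic index, colour each edge $xy$ of $L'$ by the unique vertex $z \in W$ with $xyz \in \mathcal{T}_2$; if two edges of $L'$ received the same colour $z$, the four edges joining $z$ to their endpoints would be distinct edges at $z$ (as $\mathcal{T}$ is a decomposition), so those two $L'$-edges are vertex-disjoint. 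Hence this is a proper edge colouring using at most $w$ colours, so $\chi'(L') \le w$.

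For the lower bound on $|E(L')|$, I would double-count the edges of $K_{U,W}$ and of $K_W$. Every edge of $K_{U,W}$ lies in a unique triangle of $\mathcal{T}$, necessarily in $\mathcal{T}_2 \cup \mathcal{T}_1$, and each triangle of $\mathcal{T}_2 \cup \mathcal{T}_1$ uses exactly two edges of $K_{U,W}$, so $2|\mathcal{T}_2| + 2|\mathcal{T}_1| = uw$; since $|E(L')| = |\mathcal{T}_2|$, this gives $|E(L')| = \tfrac{uw}{2} - |\mathcal{T}_1|$. Likewise every edge of $K_W$ lies in a unique triangle of $\mathcal{T}_1 \cup \mathcal{T}_0$, with each triangle of $\mathcal{T}_1$ using exactly one edge of $K_W$, so $|\mathcal{T}_1| \le \binom{w}{2}$. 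Combining the two, $|E(L')| \ge \tfrac{uw}{2} - \tfrac{w(w-1)}{2} = \tfrac{w(u-w+1)}{2}$, as required.

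The whole argument is essentially bookkeeping, so there is no substantial obstacle; the only point needing a little care is the verification that the colour classes of the edge colouring of $L'$ are genuine matchings, which rests on the fact that a decomposition assigns each edge to exactly one triangle. (The degenerate case $w=0$, where $L'$ is edgeless and $\mathcal{T}=\mathcal{T}_3$, is covered without change.)
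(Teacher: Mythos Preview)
Your argument is correct and is precisely the standard proof of this result. Note, however, that the paper does not actually give its own proof of this lemma: it is stated with a citation to \cite{Br1} and used as a known background result, so there is no in-paper proof to compare against. Your classification of triangles by $|V(T)\cap U|$, the definition of $L'$ via $\mathcal{T}_2$, the edge-colouring of $L'$ by the $W$-vertex of each triangle, and the double count yielding $|E(L')|=\tfrac{uw}{2}-|\mathcal{T}_1|\ge \tfrac{uw}{2}-\binom{w}{2}$ are exactly the ingredients one finds in the original source, and all steps are sound.
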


\begin{defn}
Let $\mathcal{P}$ and $\mathcal{P}'$ be cycle packings of a graph $G$ and let $S$ be a subset of
$V(G)$. We say that $\mathcal{P}$ and $\mathcal{P}'$ are {\em equivalent on $S$} if we can write
$\mathcal{P}=\{C_1,\ldots,C_t\}$ and $\mathcal{P}'=\{C'_1,\ldots,C'_t\}$ such that
\begin{itemize}
    \item
for $i \in \{1,\ldots,t\}$, $|E(C_i)|=|E(C'_i)|$ ;
    \item
for each $x \in S$ and for $i \in \{1,\ldots,t\}$, $x \in V(C_i)$ if and only if $x \in
V(C'_i)$; and
    \item
for each $xy \in E(G)$ such that $x,y \in S$ and for $i \in \{1,\ldots,t\}$, $xy \in E(C_i)$ if
and only if $xy \in E(C'_i)$.
\end{itemize}
\end{defn}


\section{Preliminary Results}

Most of our effort will go into proving Lemma \ref{singleEmbedding} which shows that, for any
sufficiently dense graph $L$ of order $u \geq 62$, there is a triangle decomposition of $L \vee
K_w$ if $(L,w)$ is admissible and $w = \tfrac{3u+k}{5}$ for some $k \in \{17,19,21,23\}$ (see
Section \ref{constructionSection}). Once this is established the general result follows relatively
easily (see Section \ref{mainResultSection}). The crucial point is that any odd integer $v$ can be
written as $u'+w'$ for some integers $u'$ and $w'$ such that $w' = \tfrac{3u'+k'}{5}$ for some $k'
\in \{17,19,21,23\}$.

Note that, if $u$ and $w$ are integers such that $w = \tfrac{3u+k}{5}$ for some $k \in
\{17,19,21,23\}$, then $u \not\equiv 0 \mod{5}$ and
    $$k = \left\{
    \begin{array}{ll}
        17, & \hbox{if $u \equiv 1 \mod{5}$;} \\
        19, & \hbox{if $u \equiv 2 \mod{5}$;} \\
        21, & \hbox{if $u \equiv 3 \mod{5}$;} \\
        23, & \hbox{if $u \equiv 4 \mod{5}$.}
    \end{array}
    \right.$$
It is a routine matter to verify inequalities in $u$ and $w$ under these conditions, and we will
often leave such tasks to the reader in what follows. Note in particular that, under these
conditions, $w \equiv u+1 \mod{2}$ and if $u \geq 11$ then $10 \leq w \leq u-1$.

In this section of the paper we establish a number of preliminary results that will be required
for our main construction in Section \ref{constructionSection}. We first require two results,
namely Lemmas \ref{getSubgraph} and \ref{getSubgraphEasy}, that, given a graph, find a subgraph of
it whose components are even-length paths and even-length cycles. The proofs of both of these
lemmas require Lemma \ref{disjointMatchings}, which is a well-known and easy consequence of the
max-flow min-cut theorem \cite{EliFeiSha,ForFul}.

\begin{lemma}\label{disjointMatchings}
Let $H$ be a bipartite graph with parts $A$ and $B$. Then $H$ contains two edge-disjoint matchings
$M_1$ and $M_2$ with $|E(M_1)|+|E(M_2)|=2|A|-d$ if and only if $2|S| - \sum_{x \in B}
\max(2,|\Nbd_H(x)\cap S|) \leq d$ for each subset $S$ of $A$.
\end{lemma}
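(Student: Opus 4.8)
The plan is to reformulate the statement as a claim about a single subgraph of $H$ and then invoke the integral max-flow min-cut theorem. First I would observe that, because $H$ is bipartite, a subgraph $F$ of $H$ can be written as the edge-disjoint union of two matchings if and only if $\Delta(F)\leq 2$: the forward implication is immediate, and conversely a subgraph of the bipartite graph $H$ with maximum degree at most $2$ is a vertex-disjoint union of paths and even-length cycles and hence has a proper $2$-edge-colouring. For such an $F$ we have $|E(M_1)|+|E(M_2)|=|E(F)|$, and deleting any edge of $F$ leaves a subgraph of maximum degree at most $2$; so (assuming $2|A|-d\geq 0$, which holds in all the applications) there exist two edge-disjoint matchings $M_1,M_2$ of $H$ with $|E(M_1)|+|E(M_2)|=2|A|-d$ if and only if $m\geq 2|A|-d$, where $m$ denotes the maximum of $|E(F)|$ over subgraphs $F$ of $H$ with $\Delta(F)\leq 2$.

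Next I would compute $m$ via network flows. Form a network with a source $s$, a sink $t$, an arc $sa$ of capacity $2$ for each $a\in A$, an arc $ab$ of capacity $1$ directed from $A$ to $B$ for each $ab\in E(H)$, and an arc $bt$ of capacity $2$ for each $b\in B$. The subgraphs $F$ of $H$ with $\Delta(F)\leq 2$ are in bijection with the integral $s$--$t$ flows: from such an $F$, route one unit along the arc $ab$ for each $ab\in E(F)$ (the forced amounts $\deg_F(a)\le 2$ and $\deg_F(b)\le 2$ then lie within the source- and sink-arc capacities); conversely the set of $A$-to-$B$ arcs carrying a unit of an integral flow forms such an $F$. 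The value of the flow equals $|E(F)|$, so $m$ is the value of a maximum integral flow, which by the max-flow min-cut theorem equals the minimum capacity of a cut of the network.

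It then remains to identify the minimum cuts. A cut is determined by the set $S\subseteq A$ of vertices of $A$ that remain reachable from $s$ after the deleted arcs are removed and the set $R\subseteq B$ of vertices of $B$ that can still reach $t$; for there to be no $s$--$t$ path, every arc from $S$ to $R$ must be deleted, so the cut has capacity $2|A\setminus S|+2|B\setminus R|+e$, where $e$ is the number of edges of $H$ between $S$ and $R$. For a fixed $S$ this is minimised by putting each $x\in B$ into $R$ or not according to whether $|\Nbd_H(x)\cap S|$ or $2$ is the smaller, giving capacity $2|A|-2|S|+\sum_{x\in B}\min(2,|\Nbd_H(x)\cap S|)$. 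Hence $m = 2|A| - \max_{S\subseteq A}\bigl(2|S|-\sum_{x\in B}\min(2,|\Nbd_H(x)\cap S|)\bigr)$, and the inequality $m\geq 2|A|-d$ is precisely the stated condition.

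The argument is essentially routine once this framework is in place; the only step needing genuine care is reading off the exact form of a minimum cut, in particular recognising that each $x\in B$ should contribute the cheaper of its sink-arc capacity $2$ and the number of its edges into $S$. I would also remark that the ``only if'' direction can be obtained directly without flows: given matchings $M_1,M_2$ with union $F$ and any $S\subseteq A$, every edge of $F$ is either incident with a vertex of $A\setminus S$, of which there are at most $2|A\setminus S|$, or joins $S$ to some $x\in B$, of which there are at most $\min(2,|\Nbd_H(x)\cap S|)$ for each $x$; summing yields $|E(F)|\leq 2|A|-2|S|+\sum_{x\in B}\min(2,|\Nbd_H(x)\cap S|)$, which rearranges to the required inequality, so only the ``if'' direction really needs the flow argument above.
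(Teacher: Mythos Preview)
Your argument is correct and follows essentially the same route as the paper's: build the identical flow network (source with capacity-$2$ arcs to each vertex of $A$, the edges of $H$ with capacity $1$ directed from $A$ to $B$, and capacity-$2$ arcs from each vertex of $B$ to the sink), identify integral flows with subgraphs of $H$ of maximum degree at most $2$, observe that any such bipartite subgraph splits into two matchings, and invoke max-flow min-cut.  You simply carry out in detail the cut computation that the paper leaves as ``not difficult to check'', and your direct counting argument for the necessity direction is a pleasant extra.

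One point to flag: your derivation (correctly) produces $\min(2,|\Nbd_H(x)\cap S|)$ in the sum, whereas the lemma as printed has $\max$.  This is a typo in the statement rather than an error on your part---the paper's own use of the lemma in the next proof relies on the identity $2|B''|+|B'|=2s-d$, which is exactly what the $\min$ version gives---so your closing remark that the formula is ``precisely the stated condition'' should be qualified to note this.
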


\begin{proof}
Let $a$ and $b$ be vertices not in $V(H)$. Let $N$ be a flow network on node set $V(H) \cup
\{a,b\}$ with source $a$ and sink $b$, such that the underlying graph of $N$ is $K_{\{a\},A} \cup
H \cup K_{B,\{b\}}$, each edge in $K_{\{a\},A}$ has capacity $2$ and is directed from $a$ to a
vertex in $A$, each edge in $H$ has capacity $1$ and is directed from a vertex in $A$ to a vertex
in $B$, and each edge in $K_{B,\{b\}}$ has capacity $2$ and is directed from a vertex in $B$ to
$b$. There is an obvious correspondence between $k$-flows in $N$ and subgraphs of $H$ with maximum
degree 2 and size $k$. Further, any such subgraph of $H$ can be decomposed into two edge-disjoint
matchings. So it follows from the max-flow min-cut theorem that $H$ contains two edge-disjoint
matchings $M_1$ and $M_2$ such that $|E(M_1)|+|E(M_2)|=2|A|-d$ if and only if the capacity over
every $a$-$b$ cut in $N$ is at least $2|A|-d$. It is not difficult to check that this is the case
if and only if $2|S| - \sum_{x \in B} \max(2,|\Nbd_H(x)\cap S|) \leq d$ for each subset $S$ of
$A$.
\end{proof}

\begin{lemma}\label{getSubgraph}
Let $L^*$ be a graph of order $u \geq 22$, and let $w$ be an integer such that $w =
\tfrac{3u+k}{5}$ for some $k \in \{17,19,21,23\}$. If $|E(L^*)| \geq \frac{w(u-w+1)}{2}$ and
$\Delta(L^*) \leq w-8$, then $L^*$ contains a subgraph $F$ such that $|E(F)|=u-w+1$ and each
component of $F$ is an even-length path or an even-length cycle.
\end{lemma}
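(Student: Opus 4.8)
The plan is to find the subgraph $F$ in two stages: first extract from $L^*$ a large enough subgraph whose components are paths and cycles (i.e., a subgraph of maximum degree at most $2$), and then trim and adjust this subgraph so that every component has even length and the total number of edges is exactly $u-w+1$. For the first stage, I would apply Lemma \ref{disjointMatchings} with $H = L^*$, viewed as a bipartite graph in a suitable auxiliary sense — more precisely, I would take the bipartite ``double'' or simply apply the lemma to a bipartition-like structure, since what Lemma \ref{disjointMatchings} really delivers is two edge-disjoint matchings whose union is a subgraph of maximum degree $2$. The union of two edge-disjoint matchings is exactly a disjoint union of paths and (even) cycles, so a subgraph of maximum degree at most $2$ drops out immediately. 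The size bound $2|A| - d$ from Lemma \ref{disjointMatchings} must be checked against the hypothesis $|E(L^*)| \geq \tfrac{w(u-w+1)}{2}$: one needs to verify that the cut condition $2|S| - \sum_{x\in B}\max(2,|\Nbd_H(x)\cap S|)\le d$ holds for the relevant $d$, and this is where the degree bound $\Delta(L^*)\le w-8$ and the edge-count bound combine. The key estimate is that if $|E(L^*)|$ is large and $\Delta(L^*)$ is not too large, then $L^*$ cannot have its edges ``concentrated'' in a way that would make some set $S$ violate the cut inequality; a counting argument bounding $\sum_{x\in B}|\Nbd_H(x)\cap S|$ from below in terms of $|E(L^*)|$ and $\Delta(L^*)$ should suffice.

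In the second stage I would take the subgraph $M$ of maximum degree at most $2$ produced above — a union of paths and even cycles — and modify it into $F$. Every cycle in $M$ is already even, so those need no repair. Each path component of odd length can be shortened by one edge to make it even (or, if it has length $1$, simply deleted), and the even cycles are already fine; so after deleting at most one edge per odd path component we have a subgraph all of whose components are even paths or even cycles. The only remaining issue is hitting the target size exactly. Since $u-w+1$ is the required size (and note $u-w+1 \geq 2$ under the stated constraints, indeed it is even because $w\equiv u+1\mod 2$), I would ensure the first-stage subgraph has at least $u-w+1$ edges with a little room to spare, then repeatedly delete edges from the ends of path components (and, once a path shrinks to nothing, delete entire edges of cycles after first ``breaking'' a cycle into a path) to drive the edge count down exactly to $u-w+1$ while preserving the property that every component is an even path or even cycle. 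Deleting two edges from the interior of a path, or breaking a cycle into a path and then deleting from its ends, always keeps parities under control, so the descent to the exact value $u-w+1$ is possible as long as the starting subgraph is large enough and $u-w+1$ is even.

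The main obstacle is the first stage: verifying that the cut condition in Lemma \ref{disjointMatchings} is satisfied for a value of $d$ small enough that $2|A| - d \geq u-w+1$ (with a bit of slack for the trimming in stage two). This requires a careful but elementary argument that, for every $S \subseteq A$, the sum $\sum_{x\in B}\max(2,|\Nbd_{L^*}(x)\cap S|)$ is large relative to $2|S|$ — the worst case being when $S$ induces many low-degree vertices. Here the hypothesis $\Delta(L^*)\le w-8$ is essential: it prevents a few high-degree vertices from absorbing all the edges and leaving a sparse remainder, and it forces the edges of $L^*$ to be spread out enough that the neighbourhood sums cannot collapse. The bound $u\ge 22$ and the explicit form $w=\tfrac{3u+k}{5}$ then let one convert the abstract inequality into the concrete numerical check, which (as the paper warns) is the kind of routine verification best left to the reader once the structure of the argument is in place.
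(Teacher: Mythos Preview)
Your proposal has the right instinct---use Lemma \ref{disjointMatchings} to produce two edge-disjoint matchings, whose union automatically has even cycles---but it contains a genuine gap at the point where you invoke that lemma. Lemma \ref{disjointMatchings} applies only to a bipartite graph with specified parts $A$ and $B$, and you never commit to what these parts are. Phrases like ``bipartite double'' or ``a bipartition-like structure'' do not pin this down: matchings in the bipartite double of $L^*$ do not correspond to matchings in $L^*$, and without a concrete choice of $A$ and $B$ the cut condition $2|S|-\sum_{x\in B}\max(2,|\Nbd_H(x)\cap S|)\le d$ is not even a well-defined thing to verify.

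The paper's proof supplies exactly the missing idea: it fixes a bipartition $\{A,B\}$ of $V(L^*)$ with $|A|=\tfrac{u-w+1}{2}$, chosen so that $|E(L^*)\cap E(K_B)|$ is minimised, and applies Lemma \ref{disjointMatchings} to the genuinely bipartite graph $L^*\cap K_{A,B}$. Two matchings saturating $A$ then give exactly $2|A|=u-w+1$ edges, and because the graph is bipartite every component of $M_1\cup M_2$ is an even path or even cycle---so no second ``trimming'' stage is needed at all. When the matchings fail to saturate $A$, the extremal choice of the partition is what drives the contradiction: it forces $|\Nbd_{L^*}(z)\cap B|$ to be small for every $z\in B$, and the bulk of the proof is a delicate edge-count (partitioning $E(L^*)$ into five classes $E_1,\ldots,E_5$ and bounding each) that uses this together with $\Delta(L^*)\le w-8$ and $|E(L^*)|\ge\tfrac{w(u-w+1)}{2}$. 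Your ``a counting argument \ldots\ should suffice'' does not come close to this; the argument is the substance of the lemma, not a routine check.
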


\begin{proof} Among all partitions of $V(L^*)$ into one part of size $\frac{u-w+1}{2}$ and one part of
size $\frac{u+w-1}{2}$, let $\{A,B\}$, where $|A|=\frac{u-w+1}{2}$ and $|B|=\frac{u+w-1}{2}$, be
one such that $|E(L^*) \cap E(K_B)|$ is minimised. Let $R$ be the largest subset of $A$ for which
$L^* \cap K_{R,B}$ contains two edge-disjoint matchings $M_1$ and $M_2$ such that
$|E(M_1)|=|E(M_2)|=|R|$. If $R=A$ it can be seen that $M_1 \cup M_2$ is a subgraph of $L^*$ with
the required properties, so we can assume that $R \neq A$. It is not difficult to see from the
definition of $R$ that the sizes of any two edge-disjoint matchings in $L^* \cap K_{A,B}$ add to
at most $|A|+|R|$. It follows by Lemma \ref{disjointMatchings} that there is a subset $S$ of $A$
such that $d \geq |A|-|R|$, where $d=2|S| - \sum_{x \in B} \max(2,|\Nbd_{L^*}(x)\cap S|)$.

Let $s=|S|$, $B'=\{x \in B: |\Nbd_{L^*}(x) \cap S| =1\}$ and $B''=\{x \in B: |\Nbd_{L^*}(x) \cap
S| \geq 2\}$. In the case $s \geq 2$, it can be seen that
$$\medop\sum_{x \in S}|\Nbd_{L^*}(x) \cap B| \leq s|B''| + |B'| \leq s(s-\tfrac{d}{2}),$$
where the second inequality follows from $2|B''|+|B'|=2s-d$. Hence, in this case, $|\Nbd_{L^*}(y)
\cap B| \leq s-\frac{d}{2}$ for some $y \in S$. In the case $s=1$, it is easy to see that
$|\Nbd_{L^*}(y) \cap B| \leq 1$ for the unique vertex $y \in S$. In either case, if there were a
vertex $z$ in $B$ such that $|\Nbd_{L^*}(z) \cap B| > |\Nbd_{L^*}(y) \cap B|$ then the partition
$\{(A \setminus \{y\}) \cup \{z\},(B \setminus \{z\}) \cup \{y\}\}$ would contradict our
definition of $\{A,B\}$. If $|\Nbd_{L^*}(y) \cap B| \leq 1$, this implies that $|E(L^*)| \leq
\Delta(L^*)|A|+\frac{1}{2}|B|=(w-8)\frac{u-w+1}{2}+\frac{u+w-1}{4}$, which can be seen to
contradict $|E(L^*)| \geq \frac{w(u-w+1)}{2}$ for $u \geq 11$. Thus we can assume that $s \geq 2$,
that $|\Nbd_{L^*}(x) \cap B| \leq s-\frac{d}{2}$ for each $x \in B$, and, since $\sum_{x \in B}
\max(2,|\Nbd_{L^*}(x)\cap S|) \geq |\Nbd_{L^*}(y) \cap B| \geq 2$, that $d \leq 2s-2$.

Let $F$ be a subgraph of $L^*$ such that $M_1 \cup M_2$ is a subgraph of $F$, each component of
$F$ is an even-length path or an even-length cycle, and subject to these conditions $F$ is of
maximum size. If $|E(F)| \geq u-w+1$ then we are finished, so we can suppose for a contradiction
that $|E(F)| \leq u-w-1$ (note that $|E(F)|$ and $u-w+1$ are even). Let $\{B_0,B_1,B_2\}$ be the
partition of $B$ such that $B_i = \{x \in B: \deg_F(x)= i\}$ for $i \in \{0,1,2\}$. Define a
partition $\{E_1,E_2,E_3,E_4,E_5\}$ of $E(L^*)$ as follows.
\begin{align*}
  E_1 &= \{xy \in E(L^*): x \in A \setminus S\} \\
  E_2 &= \{xy \in E(L^*): x \in S, y \notin A \setminus S\} \\
  E_3 &= \{xy \in E(L^*): x \in B_2, y \notin A\} \\
  E_4 &= \{xy \in E(L^*): x \in B_0, y \notin A \cup B_2\} \\
  E_5 &= \{xy \in E(L^*): x,y \in B_1\}
\end{align*}
We will find an upper bound on the size of each part of this partition and use these to obtain a
contradiction to $|E(L^*)| \geq \frac{w(u-w+1)}{2}$.

Because $\Delta(L^*)\leq w-8$, we have $|E_1| \leq (\frac{u-w+1}{2}-s)(w-8)$. We saw earlier that
$\sum_{x \in S}|\Nbd_{L^*}(x) \cap B| \leq s(s-\tfrac{d}{2})$, and thus $|E_2| \leq
\binom{s}{2}+s(s-\frac{d}{2})$. We also saw that $|\Nbd_{L^*}(x) \cap B| \leq s-\frac{d}{2}$ for
each $x \in B$, and so we have that $|E_3|\leq |B_2|(s-\frac{d}{2})$ and $|E_5| \leq
\frac{1}{2}|B_1|(s-\frac{d}{2})$. If there were a vertex $z_1$ in $B_0$ which was incident with
two edges $z_1z_2$ and $z_1z_3$ such that $z_2,z_3 \in B_0 \cup B_1$, then the graph with edge set
$E(F) \cup \{z_1z_2,z_1z_3\}$ would contradict our definition of $F$, and it follows that $|E_4|
\leq |B_0|$. Thus, because $|E_1|+|E_2|+|E_3|+|E_4|+|E_5|=|E(L^*)|=\frac{w(u-w+1)}{2}$,
    $$\theta=\tfrac{w(u-w+1)}{2} - ((\tfrac{u-w+1}{2}-s)(w-8) + (\tbinom{s}{2}+s(s-\tfrac{d}{2}))
    + |B_2|(s-\tfrac{d}{2}) + |B_0| + \tfrac{1}{2}|B_1|(s-\tfrac{d}{2}))$$
is nonpositive. We will obtain a contradiction by showing that $\theta$ is positive.

Because $|B_0|+|B_1|+|B_2|=|B|=\frac{u+w-1}{2}$, we can substitute
$|B_0|=\frac{u+w-1}{2}-|B_1|-|B_2|$ and simplify to obtain
$$\theta=\tfrac{1}{2}(s(2w+d-3s-15) - (2|B_2|+|B_1|)(s-\tfrac{d}{2}-1)+|B_1|+7u-9w+9).$$
Note that $2|R|+2|B_2|+|B_1|=2|E(F)| \leq 2(u-w-1)$ and thus, because $d \geq
|A|-|R|=\frac{u-w+1}{2}-|R|$, we have $2|B_2|+|B_1| \leq u-w-3+2d$. Using this fact, along with
$|B_1| \geq 0$ and simplifying we see
$$\theta \geq \tfrac{1}{4}d(u-w-2s+2d+1)+\tfrac{1}{2}s(3w-u-3s-12)+4u-5w+3,$$
(note that $s-\frac{d}{2}-1 \geq 0$ because $d \leq 2s-2$). Since $s \leq \frac{u-w+1}{2}$ and $d
\geq 1$, we have that $\tfrac{1}{4}d(u-w-2s+2d+1) \geq \tfrac{1}{4}(u-w-2s+3)$, and consequently
we have
$$\theta \geq \tfrac{1}{2}s(3w-u-3s-13)+\tfrac{1}{4}(17u-21w+15).$$
Since $2 \leq s \leq \frac{u-w+1}{2}$ and $u \geq 22$, it can be seen that $6 \leq 3s \leq
3w-u-19$, and consequently that $\tfrac{1}{2}s(3w-u-3s-13) \geq 3w-u-19$. Thus,
$$\theta \geq \tfrac{1}{4}(13u-9w-61)$$
and so, since $u \geq 14$, $\theta$ is positive as required. \end{proof}

\begin{lemma}\label{getSubgraphEasy}
Let $L^*$ be a graph of order $u \geq 16$, and let $w$ be an integer such that $w =
\tfrac{3u+k}{5}$ for some $k \in \{17,19,21,23\}$. If $|E(L^*)| \geq \frac{w(u-w+1)}{2}$,
$\Delta(L^*) \leq w-8$, and $A$ is a set of $\frac{u-w+1}{2}$ vertices of $L^*$ such that
$\deg_{L^*}(x) \geq u-w$ for each $x \in A$, then $L^*$ contains a subgraph $F$ such that
$|E(F)|=u-w+1$, $A \subseteq V(F)$, and each component of $F$ is an even-length path or an
even-length cycle.
\end{lemma}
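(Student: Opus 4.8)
The plan is to mirror the proof of Lemma~\ref{getSubgraph}, incorporating the extra requirement that a prescribed set $A$ of vertices of high degree be contained in $V(F)$ by building the set $A$ into the partition from the outset rather than choosing it to minimise edges inside $B$. Specifically, I would set $|A| = \tfrac{u-w+1}{2}$ exactly as given (the hypothesis fixes $A$ for us) and take $B = V(L^*) \setminus A$, so $|B| = \tfrac{u+w-1}{2}$. As before, let $R$ be the largest subset of $A$ for which $L^* \cap K_{R,B}$ contains two edge-disjoint matchings $M_1, M_2$ with $|E(M_1)| = |E(M_2)| = |R|$; if $R = A$ then $M_1 \cup M_2$ already has $u-w+1$ edges, has every vertex of $A$ of degree $2$ (so $A \subseteq V(F)$), and its components are even paths and even cycles, so we are done. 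Hence assume $R \neq A$.

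If $R \neq A$, then just as in Lemma~\ref{getSubgraph}, Lemma~\ref{disjointMatchings} yields a set $S \subseteq A$ with $d \geq |A| - |R|$ where $d = 2|S| - \sum_{x \in B}\max(2, |\Nbd_{L^*}(x) \cap S|)$, and the same counting shows $|\Nbd_{L^*}(y) \cap B| \leq s - \tfrac{d}{2}$ for some $y \in S$ (with $s = |S| \geq 2$, the case $s = 1$ being handled as before). The key new point is that we cannot now swap $y$ out of $A$, so instead I would argue directly: every vertex of $A$ has $\deg_{L^*} \geq u - w$, so in particular $\deg_{L^*}(y) \geq u-w$, which combined with $|\Nbd_{L^*}(y) \cap B| \leq s - \tfrac d2$ forces $|\Nbd_{L^*}(y) \cap A| \geq u - w - s + \tfrac d2$. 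Since $|A| = \tfrac{u-w+1}{2}$, this requires $s - \tfrac d2 \geq u - w - s + \tfrac d2 - (|A| - 1)$, i.e. $s - \tfrac d2 \geq \tfrac{u-w-1}{2} - \tfrac{d}{2} + 1$ is not quite it — more carefully, $u - w - s + \tfrac d2 \leq |A| - 1 = \tfrac{u-w-1}{2}$, giving $s - \tfrac d2 \geq \tfrac{u-w+1}{2} \geq s$, hence $d \leq 0$, contradicting $d \geq |A| - |R| \geq 1$. So in fact the case $R \neq A$ cannot occur, and the lemma follows.

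The main obstacle I expect is verifying that this short-circuit is really valid, i.e. that the degree condition on $A$ genuinely forces $d \leq 0$; the arithmetic above is heuristic and the actual constants ($u - w$ versus $u - w + 1$, the $\max(2,\cdot)$ truncation affecting whether $s-\tfrac d2$ is the right bound on $|\Nbd_{L^*}(y)\cap B|$, and the $s=1$ edge case) need to be handled with the same care as in Lemma~\ref{getSubgraph}. If it turns out that the degree hypothesis is slightly too weak to kill the $R \neq A$ case outright, the fallback is to run the full $\theta$-counting argument of Lemma~\ref{getSubgraph} but with the partition anchored at the given $A$: the loss from not minimising $|E(L^*) \cap E(K_B)|$ is offset by the gain of knowing $\deg_{L^*}(x) \geq u-w$ for all $x \in A$ (which directly bounds $|E_1|$ from the $A$-side rather than via $\Delta$), and one then grows $F$ greedily from $M_1 \cup M_2$ exactly as before, checking that the extendability argument never needs to add an edge at a degree-$2$ vertex of $A$. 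Either way the structure is: reduce to the matching case via Lemma~\ref{disjointMatchings}, then exploit that high-degree vertices in the small part $A$ cannot be "starved" of neighbours in $B$.
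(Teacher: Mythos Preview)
Your approach is correct and rests on the same core observation as the paper's proof: each vertex $y\in A$ has $\deg_{L^*}(y)\geq u-w$ and at most $|A|-1=\tfrac{u-w-1}{2}$ neighbours inside $A$, so $|\Nbd_{L^*}(y)\cap B|\geq \tfrac{u-w+1}{2}=|A|$. Your contradiction argument from this point is valid: for $s\geq 2$ the averaging gives some $y\in S$ with $|\Nbd_{L^*}(y)\cap B|\leq s-\tfrac{d}{2}$, whence $s-\tfrac{d}{2}\geq |A|\geq s$ and $d\leq 0$; for $s=1$ one gets $|\Nbd_{L^*}(y)\cap B|\leq 1<\tfrac{u-w+1}{2}$ (using $u\geq 16$). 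So the arithmetic you flagged as ``heuristic'' is in fact clean, and the $\theta$-counting fallback is entirely unnecessary.

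The paper's proof is the same argument stripped of the contradiction wrapper: rather than introducing $R$ and a violating set $S$, it verifies the Hall-type condition of Lemma~\ref{disjointMatchings} directly. Since every $x\in A$ has at least $|A|$ neighbours in $B$, for $|S|\geq 2$ one has $\sum_{x\in B}\min(2,|\Nbd_{L^*}(x)\cap S|)\geq \sum_{x\in S}|\Nbd_{L^*}(x)\cap B|\cdot \tfrac{1}{|S|}\cdot 2$ \ldots\ actually more simply, $\sum_{x\in B}|\Nbd_{L^*}(x)\cap S|\geq 2|A|\geq 2|S|$, and for $|S|=1$ the single vertex already has $\geq 2$ neighbours in $B$. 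Hence $d\leq 0$ for every $S$, the two matchings $M_1,M_2$ with $|E(M_1)|+|E(M_2)|=2|A|$ exist immediately, and $F=M_1\cup M_2$ works. Your route reaches the same destination via the scaffolding of Lemma~\ref{getSubgraph}; the paper simply observes that none of that scaffolding is needed once the degree hypothesis on $A$ is available.
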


\begin{proof} Let $B = V(L^*) \setminus A$ and consider the graph $L^* \cap K_{A,B}$. Since
$\deg_{L^*}(x) \geq u-w$ for each $x \in A$, we have that $|\Nbd_{L^*}(x) \cap B| \geq u-w-(|A|-1)
\geq \frac{u-w+1}{2}$ for each $x \in A$. Clearly then, $\sum_{x \in B} \max(2,|\Nbd_{L^*}(x)\cap
S|) \geq u-w+1 \geq 2|S|$ for each $S \subseteq A$ with $|S| \geq 2$, and $\sum_{x \in B}
\max(2,|\Nbd_{L^*}(x)\cap S|) \geq \frac{u-w+1}{2} \geq 2|S|$ for each $S \subseteq A$ with $|S| =
1$ (note that $u \geq 16$ implies $\frac{u-w+1}{2} \geq 2$). Thus, by Lemma
\ref{disjointMatchings}, there are two edge-disjoint matchings $M_1$ and $M_2$ in $L^*$ such that
$|E(M_1)|+|E(M_2)|=2|A|=u-w+1$. It can be seen that $M_1 \cup M_2$ is a subgraph of $L^*$ with the
required properties.
\end{proof}

We will also require Lemma \ref{getTriangle} which is similar to Lemma 5.1 of \cite{BrHoEmbed},
except that it concerns general cycle decompositions of graphs rather than simply partial Steiner
triple systems. Lemma \ref{getNose} is used only in the proof of Lemma \ref{getTriangle}. These
two lemmas are proved using path switching techniques as encapsulated below in Lemma
\ref{PathSwitchGeneral} which appears as Lemma 9 in  \cite{HoEmbed}.

\begin{lemma}[\cite{HoEmbed}]\label{PathSwitchGeneral}
Let $G$ be a graph, let $\mathcal{P}$ be a cycle packing of $G$, let $L$ be the leave of
$\mathcal{P}$, and let $a$ and $b$ be vertices in $G$ such that $\Nbd_G(a)=\Nbd_G(b)$. Then there
exists a partition of the set $(\Nbd_L(a)\cup \Nbd_L(b)) \setminus ((\Nbd_L(a)\cap \Nbd_L(b)) \cup
\{a,b\})$ into pairs such that for each pair $\{x,y\}$ of the partition, there exists a cycle
packing $\mathcal{P}_{\{x,y\}}$ of $G$ which is equivalent to $\mathcal{P}$ on $V(G) \setminus
\{a,b\}$ and whose leave $L_{\{x,y\}}$ differs from $L$ only in that $a x$, $a y$, $b x$ and $b y$
are edges in $L_{\{x,y\}}$ if and only if they are not edges in $L$.
\end{lemma}

\begin{lemma}\label{getNose}
Let $L$ be a graph, let $T$ be a set of vertices which is disjoint from $V(L)$, let $\mathcal{P}$
be a cycle packing of $L \vee K_T$, let $G$ be the leave of $\mathcal{P}$, and let $a,b,c \in T$
and $d \in V(L)$ be distinct vertices such that $ab,cd \in E(G)$. Let $\mathcal{B} = \{C \in
\mathcal{P}: \mbox{$C=(b,y,z)$ for some $y \in V(L)$, $z \in T$}\}$ and suppose that each edge in
$E(K_T) \setminus E(G)$ which is incident with $b$ is in a triangle in $\mathcal{B}$ and that
$|\Nbd_G(x) \cap T| \geq 1$ for each vertex $x$ in $V(L)$ which is in a triangle in $\mathcal{B}$.
Then there is a cycle packing of $L \vee K_T$ which is equivalent to $\mathcal{P}$ on $V(L) \cup
\{b\}$ and whose leave contains the edges $bc$ and $cd$.
\end{lemma}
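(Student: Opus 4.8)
The plan is to use the path switching machinery of Lemma~\ref{PathSwitchGeneral}, exploiting the fact that all vertices of $T$ have the same neighbourhood in $L \vee K_T$. Roughly speaking, I want to move the edge $ab$ around on the set $T$ until its endpoint lands on a vertex that lets me ``connect'' to the edge $cd$. Since $\Nbd_{L\vee K_T}(a) = \Nbd_{L\vee K_T}(b)$ for any two $a, b \in T$, Lemma~\ref{PathSwitchGeneral} applies to any such pair, and each application of a switch either creates or destroys the four edges $ax, ay, bx, by$ in the leave, while leaving everything on $V(G)\setminus\{a,b\}$ untouched.

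The key steps, in order, would be as follows. First I would try to switch on the pair $\{b, c\}$: if $d$ lies in the symmetric-difference set $(\Nbd_G(b) \cup \Nbd_G(c)) \setminus ((\Nbd_G(b)\cap\Nbd_G(c))\cup\{b,c\})$ — which it does, since $cd \in E(G)$ and we'd want $bd \notin E(G)$ — then in the partition guaranteed by the lemma $d$ is paired with some vertex, and if we're lucky $d$ is paired with a vertex that lets the leave pick up both $bc$ and $cd$. The trouble is we have no control over how $d$ is paired. So the real argument must instead use the hypothesis about the triangles in $\mathcal{B}$: the edges of $E(K_T)\setminus E(G)$ at $b$ are all ``covered'' by triangles $(b,y,z)$ with $y \in V(L)$, $z \in T$. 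The idea is that these triangles give us room to first perform a switch that repositions edges incident with $b$ inside $T$ without losing control, using the fact that for a vertex $x \in V(L)$ in such a triangle we have $|\Nbd_G(x)\cap T|\geq 1$, i.e.\ $x$ has a leave-edge going into $T$ that we can pair against. Concretely, I would pick the triangle in $\mathcal{B}$ containing the edge $bc'$ for a suitable $c'$ (or argue $bc$ itself is already handled), then perform a switch on the pair $\{b, c\}$ or on $\{a, c\}$ chosen so that the symmetric-difference set is controlled enough that we can force $bc$ into the leave; the condition $|\Nbd_G(x)\cap T|\geq 1$ ensures the relevant vertex of $L$ has a partner available so the pairing must include the pair we want.

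After the switch that puts $bc$ into the leave, I would do a second switch — now on a pair inside $T$ involving the old neighbour of $b$ — designed to restore the triangle structure and simultaneously bring $cd$ into the leave, again using that $c, a$, or whatever auxiliary vertex of $T$ we use shares its neighbourhood with $b$, and that $d$'s single leave-edge $cd$ (or the absence of $bd$) pins down the pairing. Throughout, every switch is equivalent to $\mathcal{P}$ on $V(L)\cup\{b\}$ by construction (switches on pairs of vertices of $T\setminus\{b\}$ don't touch $b$ or $V(L)$, and the one switch that does involve $b$ is equivalent on $V(L)$ and, if chosen as a switch on $\{b,\cdot\}$, on $\{b\}$ trivially since $b$ is one of the two special vertices) — I'd need to be a little careful that the cumulative effect is equivalence on $V(L)\cup\{b\}$ and not just on $V(L)$, which may force the final switch to involve only vertices of $T$ other than $b$.

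The main obstacle I expect is precisely the lack of control over the pairings produced by Lemma~\ref{PathSwitchGeneral}: the lemma hands us \emph{some} partition into pairs, not the one we want. The whole point of the hypotheses on $\mathcal{B}$ and on $|\Nbd_G(x)\cap T|$ is to constrain the possible partitions so that the pair $\{x,y\}$ we need to ``activate'' must appear, and assembling exactly the right contradiction-free choice of which vertices $a,b,c,d$ and which auxiliary vertex of $T$ to switch on — so that one pairing is forced and the bookkeeping of which edges lie in which leave comes out right — is the delicate part. A secondary obstacle is ensuring the triangle in $\mathcal{B}$ we use to reroute edges at $b$ does not itself get destroyed in a way that violates a later hypothesis; this is where the assumption that \emph{every} bad edge at $b$ is in such a triangle, rather than just one of them, does the work.
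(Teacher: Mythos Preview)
Your proposal misses the central construction of the paper's proof. The paper does \emph{not} primarily rely on Lemma~\ref{PathSwitchGeneral}; instead it builds an alternating sequence $y_0,z_1,y_1,\ldots,z_t,y_t,z_{t+1}$ with $y_0=d$, $z_1=c$, where each $(b,z_i,y_i)$ is a triangle in $\mathcal{B}$ and each $y_iz_{i+1}$ is an edge of $G$. The hypothesis that every non-leave edge at $b$ inside $K_T$ lies in a triangle of $\mathcal{B}$ is exactly what lets you extend the sequence from $z_i$ to $y_i$, and the hypothesis $|\Nbd_G(x)\cap T|\geq 1$ is what lets you extend from $y_i$ to $z_{i+1}$. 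When the sequence terminates with $bz_{t+1}\in E(G)$, the paper performs a single simultaneous \emph{rotation of triangles}: delete all $(b,y_i,z_i)$ and insert all $(b,y_i,z_{i+1})$. This is a direct replacement of cycles in $\mathcal{P}$, not an application of the path-switching lemma, and it visibly preserves equivalence on $V(L)\cup\{b\}$ because every vertex of $V(L)$ lies in the same cycles before and after, and $b$ lies in triangles of the same lengths with the same $V(L)$-partners. Only in the degenerate case where the sequence closes up ($z_{t+1}=z_s$) does the paper invoke a single $(a,z_s)$-switch to reduce to the first case.

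Your plan, by contrast, tries to produce $bc$ and $cd$ via one or two applications of Lemma~\ref{PathSwitchGeneral}, but you correctly note that the lemma gives you no control over which pair $\{x,y\}$ you get, and nothing in your outline explains how the hypotheses force the pairing you want. More seriously, your claim that a switch on $\{b,\cdot\}$ is ``trivially'' equivalent on $\{b\}$ is wrong: Lemma~\ref{PathSwitchGeneral} guarantees equivalence only on the complement of the switching pair, so a switch involving $b$ destroys exactly the equivalence at $b$ that you need. The paper avoids this entirely by never switching on $b$; the rotation keeps $b$ fixed by design, and the one path switch used (in Case~2) is on the pair $\{a,z_s\}\subseteq T\setminus\{b\}$. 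Without the alternating-sequence idea and the triangle rotation, your outline does not yield a proof.
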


\begin{proof} If $bc \in E(G)$ then we are finished immediately, so we may assume that $bc \notin E(G)$.
Let $y_0=d$ and $z_1=c$. The hypotheses of the lemma guarantee that we can create a sequence
$y_0,z_1,y_1,\ldots,z_t,y_t,z_{t+1}$ of vertices in $V(L) \cup T$ such that
\begin{itemize}
    \item
$y_iz_{i+1} \in E(G)$ for $i \in \{0,\ldots,t\}$;
    \item
$(b,z_i,y_i) \in \mathcal{P}$ for $i \in \{1,\ldots,t\}$; and
    \item
$z_1,\ldots,z_t,b$ are distinct vertices in $T$, $y_0,\ldots,y_t$ are distinct vertices in $U$,
and either $bz_{t+1} \in E(G)$ or $z_{t+1} \in \{z_1,\ldots,z_t\}$.
\end{itemize}

{\bf Case 1.} Suppose that $bz_{t+1} \in E(G)$ (this includes the case where $z_{t+1}=a$). Then
let
    $$\mathcal{P}'=(\mathcal{P} \setminus \{(b,y_i,z_i):i\in\{1,\ldots,t\}\}) \cup \{(b,y_i,z_{i+1}):i\in\{1,\ldots,t\}\}.$$
It is routine to check that $\mathcal{P}'$ is a cycle packing of $L \vee K_T$ with the required
conditions.

{\bf Case 2.} Suppose that $z_{t+1}=z_s$ for some $s \in \{1,\ldots,t\}$. If the $(a,z_s)$-switch
in $\mathcal{P} \setminus \{(b,y_s,z_s)\}$ with origin $y_{s-1}$ does not have terminus $y_s$,
then let $\sigma$ be this switch. Otherwise, let $\sigma$ be the $(a,z_s)$-switch in $\mathcal{P}
\setminus \{(b,y_s,z_s)\}$ with origin $y_t$. It follows from its definition that $\sigma$ does
not terminate at $y_{s-1}$. Note also that $\sigma$ does not terminate at $b$ because $b$ is
adjacent to both $a$ and $z_s$ in the leave of $\mathcal{P} \setminus \{(b,y_s,z_s)\}$. Unless
$s=1$ and $\sigma$ has origin $y_0$, let $\mathcal{P}^{\dag}$ be the cycle packing of $L \vee K_T$
obtained by applying $\sigma$ to $\mathcal{P} \setminus \{(b,y_s,z_s)\}$ (we will deal with the
exceptional case shortly). Consider the cycle packing $\mathcal{P}^{\dag} \cup \{(b,y_s,z_s)\}$ of
$L \vee K_T$. We are now in a situation similar to Case 1, with the relevant vertex sequence being
$y_0,z_1,y_1,\ldots,z_{s-1},y_{s-1},a$ if $\sigma$ has origin $y_{s-1}$ and being
$y_0,z_1,y_1,\ldots,z_t,y_t,a$ otherwise. Thus, we can complete the proof as in Case 1 (note that
$\mathcal{P}^{\dag} \cup \{(b,y_s,z_s)\}$ is equivalent to $\mathcal{P}$ on $V(L) \cup (T
\setminus \{a,z_s\})$ by Lemma \ref{PathSwitchGeneral}). In the case where $s=1$ and $\sigma$ has
origin $y_0$, let $\mathcal{P}^{\dag}$ be the cycle packing of $L \vee K_T$ obtained by applying
the $(a,z_1)$-switch with origin $y_1$ to $\mathcal{P} \setminus \{(b,y_1,z_1)\}$ (this switch
does not terminate at $y_0$, because it is not $\sigma$) and observe that $\mathcal{P}^{\dag} \cup
\{(a,b,y_1)\}$ has the required properties (note that $\mathcal{P}^{\dag} \cup \{(a,b,y_1)\}$ is
equivalent to $\mathcal{P}$ on $V(L) \cup (T \setminus \{a,z_1\})$ by Lemma
\ref{PathSwitchGeneral}). \end{proof}

\begin{lemma}\label{getTriangle}
Let $L$ be a graph, let $T$ be a set of vertices which is disjoint from $V(L)$, let $\mathcal{P}$
be a cycle packing of $L \vee K_T$, and let $G$ be the leave of $\mathcal{P}$. Suppose that
\begin{itemize}
    \item
$|E(G) \cap E(K_T)| \geq 1$;
    \item
every cycle in $\mathcal{P}$ that contains an edge in $E(K_T)$ is a triangle with one vertex in
$V(L)$; and
    \item
$|\Nbd_G(x) \cap T| \geq 1$ for each $x \in V(L)$, and $|\Nbd_G(y) \cap T| \geq 2$ for some $y
\in V(L)$.
\end{itemize}
Then there is a cycle packing $\mathcal{P}' \cup \{C\}$ of $L \vee K_T$ such that $C$ is a
triangle with $V(C) \cap V(L)=\{y\}$ and $\mathcal{P}'$ is equivalent to $\mathcal{P}$ on $V(L)$.
\end{lemma}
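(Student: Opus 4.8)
The plan is to build the desired triangle $C$ greedily from the edge $yz$, where $z\in T$ is a vertex with $|\Nbd_G(y)\cap T|\geq 2$, and then to \emph{fix up} the configuration using the switching machinery of Lemmas \ref{PathSwitchGeneral} and \ref{getNose}. First I would pick $z_1,z_2\in \Nbd_G(y)\cap T$ distinct; if $z_1z_2\in E(G)$ then $(y,z_1,z_2)$ is already the required triangle and we are done with $\mathcal{P}'=\mathcal{P}$. So assume $z_1z_2\notin E(G)$, i.e.\ the edge $z_1z_2$ of $K_T$ lies in some cycle of $\mathcal{P}$, which by hypothesis is a triangle $(b,z_1,z_2)$ with $b\in V(L)$. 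The idea is to use Lemma \ref{getNose} to move things around so that $z_1z_2$ becomes an edge of the leave while keeping $yz_1$ an edge of the leave, after which $(y,z_1,z_2)$ is a triangle we can add.

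To apply Lemma \ref{getNose} we need an edge $ab\in E(G)\cap E(K_T)$ incident with $b$ and an edge $cd\in E(G)$ with $c\in T$, $d\in V(L)$; in our situation the roles are played by taking $\{a,b,c\}\subseteq T$ and $d\in V(L)$ appropriately so that the conclusion delivers a packing whose leave contains $z_1z_2$ together with an edge we still control at $y$. I would set this up by letting the triangle of $\mathcal{P}$ through $z_1z_2$ be $(b,z_1,z_2)$, take $d=y$, take $c=z_1$ (so $cd=z_1y\in E(G)$), and take $a$ to be any vertex of $T$ with $ab\in E(G)$ — such an $a$ exists provided $|\Nbd_G(b)\cap T|\geq 1$, and if no such edge exists one first performs a preliminary switch or argues directly since then $b$ is joined in $G$ to no vertex of $T$ and a parity/degree count forces a usable edge elsewhere. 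The hypotheses of Lemma \ref{getNose} — that every edge of $E(K_T)\setminus E(G)$ incident with $b$ lies in a triangle of $\mathcal{B}$, and that each $V(L)$-vertex appearing in such a triangle has a $T$-neighbour in $G$ — follow from the structural assumption that every cycle of $\mathcal{P}$ meeting $E(K_T)$ is a triangle with exactly one vertex in $V(L)$ (so every non-leave edge of $K_T$ through $b$ is covered by such a triangle) together with the hypothesis $|\Nbd_G(x)\cap T|\geq 1$ for all $x\in V(L)$. Applying Lemma \ref{getNose} then yields a cycle packing $\mathcal{P}''$ equivalent to $\mathcal{P}$ on $V(L)\cup\{b\}$ whose leave contains $z_1z_2$ and $z_1y$; in particular, since the switching is equivalent on $V(L)$ and moves only edges at $b$ and the $z_i$'s in controlled ways, $y$ still has its leave-edge to $z_1$ and $z_1z_2$ is now a leave edge sharing no $V(L)$-vertex except possibly $y$.

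Having obtained a packing $\mathcal{P}''$ whose leave $G''$ contains the path $y\,z_1\,z_2$ with $z_1,z_2\in T$, I set $C=(y,z_1,z_2)$ and $\mathcal{P}'=\mathcal{P}''$, noting $V(C)\cap V(L)=\{y\}$ as required. The only subtlety is to confirm $\mathcal{P}'$ is equivalent to $\mathcal{P}$ on \emph{all} of $V(L)$: Lemma \ref{getNose} guarantees equivalence on $V(L)\cup\{b\}$, and the initial removal of the triangle $(b,z_1,z_2)$ and any preliminary switches must be tracked so that no cycle incident with a $V(L)$-vertex other than through edges at $b$ is disturbed — this is exactly the kind of bookkeeping that the equivalence-on-$S$ definition is designed to carry, and it works because all edge changes happen among $T$-vertices and at $b$.

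The main obstacle I anticipate is the case analysis needed when $b$ has no neighbour in $T$ in the leave $G$ (so that no edge $ab$ is immediately available to feed into Lemma \ref{getNose}), and more generally ensuring that after the preliminary manipulation the vertex $y$ with $|\Nbd_G(y)\cap T|\geq 2$ still has two usable $T$-edges — the switch might consume one of them. I would handle this by choosing, among the (at least two) $T$-neighbours of $y$ in $G$, which one to designate as $z_1$ only after examining $b$: if $b$ lies in the triangle through $z_1z_2$ we can instead reselect $z_1,z_2$ or introduce an auxiliary vertex with the same neighbourhood to invoke Lemma \ref{PathSwitchGeneral} directly, pushing a leave edge onto $b$ and thereby restoring the needed hypothesis. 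This is precisely the sort of delicate but routine switching argument that mirrors Lemma 5.1 of \cite{BrHoEmbed}, so I expect it to go through with care but without any fundamentally new idea.
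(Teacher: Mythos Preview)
Your application of Lemma \ref{getNose} is miswired. In that lemma the vertices $a,b,c$ must all lie in $T$, and the conclusion is that the new leave contains the edges $bc$ and $cd$. You take $b$ to be the $V(L)$-vertex of the triangle $(b,z_1,z_2)$ covering $z_1z_2$; with that choice the hypotheses of Lemma \ref{getNose} are not met, and in any case the conclusion ``leave contains $bc$ and $cd$'' would give you $bz_1$ and $z_1y$, not $z_1z_2$ and $z_1y$ as you claim. The only way to get $z_1z_2$ out of the lemma is to take $b=z_2$ (and $c=z_1$, $d=y$), which then requires some $a\in T$ with $az_2\in E(G)$ --- i.e.\ that $z_2$ has a $T$-neighbour in the leave. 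This is not guaranteed, and your suggested fallback (``a parity/degree count forces a usable edge elsewhere'') does not work here: there is no parity obstruction to all $T$-neighbours of $y$ being isolated within $G\cap K_T$.

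The paper's proof avoids this by not trying to free the specific edge $z_1z_2$. It splits into two cases. In Case 1, some $r\in \Nbd_G(y)\cap T$ already has a $T$-neighbour $q$ in $G$; then Lemma \ref{getNose} is applied with $b=r$ (so equivalence on $V(L)\cup\{r\}$ preserves the edge $ry$), $a=q$, $c=s$ the other $T$-neighbour of $y$, $d=y$, yielding a leave containing $rs$, $sy$ and still $ry$ --- whence $(r,s,y)$ is the triangle. In Case 2, no $T$-neighbour of $y$ has a $T$-neighbour in $G$; here one first applies Lemma \ref{getNose} using any edge $pq\in E(G)\cap E(K_T)$ (which exists by hypothesis) with $b=q$, $c=r\in \Nbd_G(y)\cap T$, $d=y$, producing a leave containing $qr$ and $ry$ and thus reducing to Case 1. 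The missing idea in your proposal is this two-stage structure; a single call to Lemma \ref{getNose} with your vertex choices does not suffice.
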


\begin{proof} {\bf Case 1.} Suppose there are vertices $q$ and $r$ in $T$ such that $ry, qr \in
E(G)$. Let $s$ be a vertex in $(\Nbd_G(y) \cap T) \setminus \{r\}$. If $s=q$ then $\mathcal{P}
\cup \{(q,r,y)\}$ is a cycle packing of $L \vee K_T$ with the required properties, so we may
assume that $s \neq q$. Apply Lemma \ref{getNose} to $\mathcal{P}$, taking $a=q$, $b=r$, $c=s$ and
$d=y$, to obtain a cycle packing $\mathcal{P}'$ of $L \vee K_T$ which is equivalent to
$\mathcal{P}$ on $V(L) \cup \{r\}$ and whose leave contains the edges $rs$ and $sy$. Note that the
leave of $\mathcal{P}'$ also contains the edge $ry$ because $\mathcal{P}'$ is equivalent to
$\mathcal{P}'$ on $V(L) \cup \{r\}$. Then $\mathcal{P}' \cup \{(r,s,y)\}$ is a cycle packing of $L
\vee K_T$ with the required properties.

{\bf Case 2.} Suppose that no vertex in $\Nbd_G(y) \cap T$ is adjacent in $G$ to another vertex in
$T$. Let $r$ be a vertex in $\Nbd_G(y) \cap T$ and let $pq$ be an edge in $E(G) \cap E(T)$. Apply
Lemma \ref{getNose} to $\mathcal{P}$ with $a=p$, $b=q$, $c=r$ and $d=y$, to obtain a cycle packing
$\mathcal{P}'$ of $L \vee K_T$ which is equivalent to $\mathcal{P}$ on $V(L) \cup \{q\}$ and whose
leave contains the edges $qr$ and $ry$. We are now in a situation covered by Case 1 and can
complete the proof as we did there. \end{proof}

Finally we will require Lemma 18 from \cite{HoEmbed}. The phrasing has been altered substantially
so as to avoid the need to introduce technical definitions from that paper. This lemma allows us
to obtain a triangle decomposition of a graph $L \vee K_w$ from a cycle packing of $L \vee
K_{w-5}$ possessing very specific properties.

\begin{lemma}[\cite{HoEmbed}]\label{finishOff}
Let $L$ be a graph of order $u$, let $w$ be an integer such that $\frac{3u+17}{5} \leq w \leq
u-1$, and let $T$ be a set of $w-5$ vertices which is disjoint from $V(L)$. Suppose that $(L,w)$
is admissible and there exists a cycle packing $\mathcal{P}$ of $L \vee K_T$ with a leave $F \cup
G$ such that the following hold.
\begin{itemize}
    \item
$F$ is a subgraph of $L$ such that every component of $F$ is an even-length path or an
even-length cycle, and $G$ is a subgraph of $K_{V(L),T}$.
    \item
Each cycle of length at least 4 in $\mathcal{P}$ is a subgraph of $K_{D,T}$, for some proper
subset $D$ of $V(L) \setminus V(F)$ such that each vertex in $D$ is in exactly one such cycle
and $|D| = \frac{5w-3u-13}{2}$.
    \item
For some distinct vertices $a_1, a_2 \in D$, we have
$$\deg_{G}(x)=
\left\{
  \begin{array}{ll}
    \deg_{F}(x)+1, & \hbox{if $x \in V(F)$;} \\
    3, & \hbox{if $x \in D \setminus \{a_1,a_2\}$;} \\
    1, & \hbox{if $x \in (U \setminus (V(F) \cup D)) \cup \{a_1,a_2\}$.}
  \end{array}
\right.$$
\end{itemize}
Then there exists a triangle decomposition of $L \vee K_w$.
\end{lemma}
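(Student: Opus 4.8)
The plan is to extend $L\vee K_T$ to $L\vee K_w$ by introducing a set $W$ of five new vertices, each joined to every other vertex, to retain every triangle of $\mathcal{P}$ in the decomposition we build, and hence to reduce the problem to finding a triangle decomposition of
$$H\;=\;(F\cup G)\;\cup\;\mathcal{C}\;\cup\;K_{V(L),W}\;\cup\;K_{T,W}\;\cup\;K_W,$$
where $\mathcal{C}$ denotes the union of the cycles of $\mathcal{P}$ of length at least $4$. Indeed, the edges of $L\vee K_w$ not lying in $H$ are exactly those used by the length-$3$ cycles of $\mathcal{P}$, and since $(L,w)$ is admissible we have $|E(L\vee K_w)|\equiv 0\mod{3}$, so it suffices to triangle-decompose $H$. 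I would open with the routine checks that every vertex of $H$ has even degree --- for $x\in V(L)$ from the given degree formula for $G$ together with $\deg_F(x)\in\{0,1,2\}$ and the fact that $x$ lies on at most one cycle of $\mathcal{C}$; for $t\in T$ from the fact that $\deg_G(t)$ is forced odd (as $\deg_{L\vee K_T}(t)=u+w-6$ is odd and the leave of a cycle packing preserves degree parities) while $\mathcal{C}$ and the edges to $W$ contribute even amounts; and for the vertices of $W$ because their degree $u+w-1$ is even.

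Next I would classify the triangles available for a decomposition of $H$: a triangle inside $W$ uses three edges of $K_W$; a triangle $(v,w_i,w_j)$ with exactly two vertices in $W$ uses one edge of $K_W$ and two edges joining $V(L)\cup T$ to $W$; a triangle $(v,v',w_i)$ with exactly one vertex in $W$ uses a single edge of $F\cup G\cup\mathcal{C}$ together with two edges joining $V(L)\cup T$ to $W$; and a triangle entirely inside $V(L)\cup T$ must --- because $F$ is a union of paths and cycles while $G$ and $\mathcal{C}$ are bipartite between $V(L)$ and $T$ --- have the form $(x,y,t)$ with $xy\in E(F)$ and $xt,yt\in E(G)$. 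The construction then runs in three stages: (1) cover $\mathcal{C}$ by coning each even cycle $(x_1,\dots,x_{2k})$ of $\mathcal{C}$ with two vertices of $W$, giving the $2k$ triangles $(w_1,x_{2i-1},x_{2i})$ and $(w_2,x_{2i},x_{2i+1})$ with subscripts mod $2k$ (if two cycles of $\mathcal{C}$ share a vertex of $T$, assign them different pairs of coning vertices); (2) cover $F\cup G$ by choosing a suitable collection of ``internal'' triangles $(x,y,t)$ and routing every remaining edge of $F\cup G$ into a triangle $(v,v',w_i)$; (3) finish off $K_W$ together with the leftover edges joining $V(L)\cup T$ to $W$, which --- by an edge count together with the fact that $K_5$ has at most two edge-disjoint triangles --- will number $8$, $14$, or $20$, and can then be triangulated explicitly using the decomposition of $K_5$ into two triangles and a $4$-cycle.

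The genuine difficulty lies in Stage (2) and in making Stages (1)--(3) mesh. Routing the edges of $F\cup G\cup\mathcal{C}$ into triangles $(v,v',w_i)$ amounts to choosing, for each such edge, a vertex of $W$ so that no vertex of $V(L)\cup T$ receives the same $w_i$ twice --- essentially a proper $5$-edge-colouring of what is left of $F\cup G\cup\mathcal{C}$ whose colour classes must in addition leave residual degrees at $w_1,\dots,w_5$ consistent with a triangle decomposition of the residual graph handled in Stage (3). This is where the precise hypotheses enter: the degree conditions $\deg_G(x)=\deg_F(x)+1$ on $V(F)$ and $\deg_G(x)=3$ on $D\setminus\{a_1,a_2\}$ control the parities of the residual $W$-degrees, the two exceptional vertices $a_1,a_2$ supply exactly the adjustment needed so that these residual degrees sum correctly, and the inequalities $\tfrac{3u+17}{5}\leq w\leq u-1$ together with $|D|=\tfrac{5w-3u-13}{2}$ ensure that $F\cup G$ is large enough, and contains enough triangles of the form $(x,y,t)$, for a valid routing to exist. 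I would prove the existence of such a routing by a parity and counting argument (or, where convenient, a Hall/max-flow argument in the spirit of Lemma \ref{disjointMatchings}) rather than by an explicit global formula, and then assemble the triangles from the three stages together with the retained triangles of $\mathcal{P}$. I expect the verification that the residual structure around $W$ can always be completed to be the main obstacle.
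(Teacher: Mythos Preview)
The paper does not contain a proof of this lemma. It is explicitly quoted as Lemma~18 of \cite{HoEmbed} (with the phrasing adapted to avoid importing that paper's technical definitions), so there is no in-paper proof to compare your proposal against.

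As a standalone outline, your three-stage plan (cone the long cycles of $\mathcal{C}$ with pairs from the five new vertices, route the edges of $F\cup G$ through $W$ via a suitable $5$-edge-colouring, then absorb the residual into a decomposition of $K_5$ plus a few mixed triangles) is the natural architecture and matches the spirit of how \cite{HoEmbed} handles the last five vertices. However, your write-up is not yet a proof: Stage~(2) is where all the content lies, and you defer it to ``a parity and counting argument (or \ldots\ a Hall/max-flow argument)'' without actually exhibiting one. Concretely, you would need to show that the multigraph on $V(L)\cup T$ formed by the unrouted edges of $F\cup G$ admits a proper $5$-edge-colouring whose colour classes have sizes satisfying the exact congruence and magnitude constraints that make Stage~(3) go through (and that these sizes are simultaneously achievable, not merely individually). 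You would also need to control, for each vertex of $T$, how many edges of each colour are incident with it after Stages~(1) and~(2), since a clash there would reuse an edge to $W$. The role of the two exceptional vertices $a_1,a_2$ and of the exact value $|D|=\tfrac{5w-3u-13}{2}$ is precisely to make these counts balance, but you have not verified that they do. Until that routing/colouring step is made precise, the proposal remains a plausible plan rather than a proof.
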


\section{Construction}\label{constructionSection}


In this section we will prove that there is a triangle decomposition of $L \vee K_w$ for any
sufficiently dense graph $L$ of order $u \geq 62$ and any integer $w$ such that $(L,w)$ is
admissible and $w = \tfrac{3u+k}{5}$ for some $k \in \{17,19,21,23\}$. Our approach is first to
find a triangle packing of $L$ whose leave $L^*$ has exactly $\frac{w(u-w+1)}{2}$ edges and has
maximum degree at most $w-8$ (see Lemma \ref{makeLeaveSparse}). We then construct a triangle
decomposition of $L^* \vee K_w$ (see Lemma \ref{singleSparseLeaveEmbedding}). The union of these
sets of triangles is a triangle decomposition of $L \vee K_w$. Choosing
$|E(L^*)|=\frac{w(u-w+1)}{2}$ means that a triangle decomposition of $L^* \vee K_w$ that has no
triangle with all three vertices in $V(L^*)$ also has no triangle with no vertices in $V(L^*)$.
This makes possible a proof of Lemma \ref{singleSparseLeaveEmbedding} based on the repeated
application of Lemma \ref{getTriangle}.

We prove Lemma \ref{makeLeaveSparse} by first taking a maximum triangle packing of a complete
graph on vertex set $V(L)$ and deleting from this packing any triangles which are not subgraphs of
$L$. We then adjust the resulting packing to ensure that it has the required properties.

\begin{lemma}\label{makeLeaveSparse}
Let $L$ be a graph of order $u \geq 62$, let $w$ be an integer such that $w = \tfrac{3u+k}{5}$ for
some $k \in \{17,19,21,23\}$, and suppose that $(L,w)$ is admissible. If $|E(L)| \geq
\binom{u}{2}-\frac{w(u-w+1)-u-2}{4}$, then there is a triangle packing of $L$ with a leave $L^*$
such that $|E(L^*)|=\frac{w(u-w+1)}{2}$ and $\Delta(L^*) \leq w-8$.
\end{lemma}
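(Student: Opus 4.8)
The plan is to start from a maximum triangle packing of $K_{V(L)}$, delete the triangles that use an edge not in $L$, and then correct the resulting leave in two stages: first reducing its size down to exactly $\frac{w(u-w+1)}{2}$ by removing further triangles, and then reducing its maximum degree to at most $w-8$ by a local swapping argument. I would begin by recalling (e.g.\ from the proof of Lemma \ref{LindnerTh} in \cite{BrHoEmbed} or standard results) that $K_{V(L)}$ has a triangle packing whose leave $N$ satisfies $|E(N)| \leq u$ or so — more precisely a leave that is as small as the divisibility/parity obstructions allow, so $|E(N)| = O(u)$. Deleting the at most $\binom{u}{2}-|E(L)|$ triangles of this packing that contain a non-edge of $L$ (each such triangle containing at least one of these $\binom{u}{2}-|E(L)|$ missing edges, and each missing edge lying in at most one triangle of the packing) yields a triangle packing of $L$ whose leave $L_0$ has $|E(L_0)| \leq |E(N)| + 3(\binom{u}{2}-|E(L)|) \leq u + \frac{3(w(u-w+1)-u-2)}{4}$. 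Using the hypothesis $|E(L)| \geq \binom{u}{2}-\frac{w(u-w+1)-u-2}{4}$ and the admissibility arithmetic $w=\frac{3u+k}{5}$, a routine check shows $|E(L_0)| \geq \frac{w(u-w+1)}{2}$ as well (since the original packing's leave is nonempty and the removed triangles contributed enough edges), and that $|E(L_0)| - \frac{w(u-w+1)}{2}$ is divisible by $3$ because both quantities are $\equiv 0 \pmod 3$ modulo the admissibility of $(L,w)$.

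Next I would reduce $|E(L_0)|$ to exactly $\frac{w(u-w+1)}{2}$. The triangle packing of $K_{V(L)}$ we started from is a union of $\frac{1}{3}(\binom u2 - |E(N)|)$ edge-disjoint triangles all of which lie in $L$ except those we removed; but we want to \emph{add} triangles to our packing of $L$ (equivalently remove edges from $L_0$). Since $L_0$ contains a large triangle-packable portion — indeed $L_0$ is dense enough that it has many edge-disjoint triangles — I would repeatedly pull out triangles from $L_0$, each reducing $|E(L_0)|$ by $3$, until the edge count hits the target $\frac{w(u-w+1)}{2}$; this is possible provided $L_0$ always contains a triangle while its edge count exceeds this target, which follows from the fact that a graph on $u$ vertices with more than $\frac{w(u-w+1)}{2} = \Theta(u^2)$ edges (here the target is a constant fraction of $\binom u2$ strictly below $\frac12\binom u2$ by the $w$-range $\frac{3u+17}{5}\le w \le u-1$) certainly contains a triangle by Turán-type bounds. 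Call the resulting leave $L^*_{\mathrm{mid}}$ with $|E(L^*_{\mathrm{mid}})| = \frac{w(u-w+1)}{2}$.

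The main obstacle is the final stage: forcing $\Delta(L^*) \leq w-8$ while keeping $|E(L^*)|$ fixed at $\frac{w(u-w+1)}{2}$. If $L^*_{\mathrm{mid}}$ has a vertex $x$ with $\deg(x) \geq w-7$, then since the average degree of $L^*_{\mathrm{mid}}$ is $\frac{w(u-w+1)}{u} \le w-7$ roughly (here the arithmetic must be checked: one needs $\frac{w(u-w+1)}{u}$ comfortably below $w-8$, which holds because $w(u-w+1) = wu - w(w-1)$ and $w \ge \frac{3u+17}{5}$ makes $w(w-1)$ of order $\frac{9u^2}{25}$, forcing the average degree down to roughly $\frac{16u}{25} < w-8$ for large $u$), there must be a vertex $y$ with much smaller degree, and in fact many vertices of degree $\le w-9$ — say $y$ and $y'$ non-adjacent to each other and with room to gain edges. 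I would then perform a \emph{triangle swap}: remove a triangle $T$ meeting $x$ from the packing (this puts $3$ edges back into the leave, one at $x$) — no, rather the reverse: find a triangle $xab$ that is \emph{not} currently in the packing but whose other two edges $xa, xb$ are in the leave and $ab$ is also in the leave, add it to the packing to drop $\deg(x)$ by $2$; but this may not be available. The cleaner route, which I expect the author uses, is an edge-count-preserving local modification: pick an edge $xz \in E(L^*_{\mathrm{mid}})$ at a high-degree vertex $x$, pick a low-degree vertex $y$, and use the packing structure (a triangle $T$ of the packing through an edge incident to $y$, or through $z$) to re-route, deleting $xz$ from the leave and inserting an edge at $y$ instead — concretely, if $T = \{z, p, q\}$ is a triangle in the packing with $y \notin T$ and $yp, yq \notin E(L^*_{\mathrm{mid}})$, replace $T$ by $\{y,p,q\}$ in the packing; this returns $z$-to-$p$, $z$-to-$q$ edges to the leave and removes $y$-$p$, $y$-$q$ — the net effect on degrees must be tracked carefully to ensure $\Delta$ strictly decreases in a suitable potential (e.g.\ $\sum_x \max(0,\deg(x)-(w-8))^2$ or the number of vertices exceeding $w-8$), while $|E(L^*)|$ is unchanged. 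The delicate point is guaranteeing such a swap always exists when $\Delta > w-8$: this requires that the low-degree vertices are numerous enough and that the packing has enough triangles avoiding any fixed small set, both of which follow from the gap between $\frac{w(u-w+1)}{2}$ and $\frac12\binom u2$ together with $u \ge 62$. Iterating the swap and noting the potential strictly decreases at each step gives termination with $\Delta(L^*) \le w-8$, completing the proof.
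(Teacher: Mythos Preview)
Your proposal has the right opening move but goes wrong at the edge-counting step, and this sends the rest of the argument in the wrong direction.

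When you delete a triangle of the $K_U$-packing that uses a non-edge of $L$, that triangle contributes at most \emph{two} edges of $L$ to the leave (not three), since at least one of its edges lies in $L^c$. With this tight factor the paper gets
\[
|E(L_0)| \;\le\; 2|E(L^c)| + |E(N)| \;\le\; \tfrac{w(u-w+1)-u-2}{2} + \tfrac{u+2}{2} \;=\; \tfrac{w(u-w+1)}{2},
\]
so the leave is already \emph{at most} the target. Your claimed lower bound $|E(L_0)| \ge \tfrac{w(u-w+1)}{2}$ is false in general: take $L=K_U$, where $L_0=N$ has only $O(u)$ edges. Consequently the correct second stage is the opposite of what you do: one \emph{removes} triangles from the packing (increasing the leave three edges at a time) until the leave has exactly $\tfrac{w(u-w+1)}{2}$ edges, using only the divisibility $|E(L)|\equiv \tfrac{w(u-w+1)}{2}\pmod{3}$ from admissibility. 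No Tur\'an argument is needed (and indeed $\tfrac{w(u-w+1)}{2}\approx \tfrac{3u^2}{25} < \tfrac{u^2}{4}$, so Tur\'an would not help anyway).

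For the degree-reduction stage, your first instinct was the right one and you should not have abandoned it. The paper's swap, at a vertex $a$ of maximum leave-degree with $A=\Nbd_H(a)$, is: find an edge $xy\in E(H)\cap E(K_A)$, \emph{add} the triangle $(x,y,a)$ to the packing, and simultaneously \emph{remove} some triangle $C$ from the packing chosen so that $V(C)$ avoids the set $S=\{z:\deg_H(z)\ge w-8\}$. This keeps $|E(H)|$ fixed, drops $\deg_H(a)$ by $2$, and creates no new vertices of degree $\ge w-8$. The existence of such a $C$ follows from a counting argument bounding the number of triangles meeting $S$ against the total $|\mathcal{P}|$. A second case handles the possibility that $A$ is independent in $H$: then one finds $x,y\in A$ with $xy$ lying in a triangle $(x,y,z)\in\mathcal{P}$ with $z\notin S$, and swaps $(x,y,z)$ for $(x,y,a)$. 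Your alternative ``replace $\{z,p,q\}$ by $\{y,p,q\}$'' does not touch the high-degree vertex $x$ at all, and as written would require $yp,yq$ to be in the leave rather than out of it.
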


\begin{proof} Let $U=V(L)$, let $L^c=K_U-L$, and note that $|E(L^c)| \leq \frac{w(u-w+1)-u-2}{4}$. By the
main result of \cite{Sc} there is a triangle packing $\mathcal{M}$ of $K_U$ whose leave $G$
contains at most $\frac{u+2}{2}$ edges (note that $u \geq 6$). Let
    $$\mathcal{P}^{\dag}=\{C \in \mathcal{M}:E(C) \subseteq E(L)\}$$
be a triangle packing of $L$, and let $H^{\dag}$ be the leave of $\mathcal{P}^{\dag}$. For each $C
\in \mathcal{M} \setminus \mathcal{P}$, we have that $|E(C) \cap E(L)| \leq 2$, $|E(C) \cap
E(L^c)| \geq 1$, and no other triangle in $\mathcal{M}$ contains any edge in $E(C) \cap E(L^c)$.
Thus,
    $$|E(H^{\dag})| \leq 2|E(L^c)|+|E(G)| \leq \tfrac{w(u-w+1)}{2},$$
using $|E(L^c)| \leq \frac{w(u-w+1)-u-2}{4}$ and $|E(G)| \leq \frac{u+2}{2}$.
So, because $(L,w)$ being admissible implies that $|E(L)| \equiv \frac{w(u-w+1)}{2} \mod{3}$, we
can delete triangles from $\mathcal{P}^{\dag}$ to obtain a triangle packing $\mathcal{P}$ of $L$
such that the leave $H$ of $\mathcal{P}$ has exactly $\frac{w(u-w+1)}{2}$ edges. Note that
$\deg_H(x) \equiv \deg_L(x) \equiv w \mod{2}$ for each $x \in U$, using the fact that $(L,w)$ is
admissible. Because $|\mathcal{P}| = \tfrac{1}{3}(|E(L)|-|E(H)|)$ it can be seen to follow from
our hypothesis $|E(L)| \geq \binom{u}{2}-\frac{w(u-w+1)-u-2}{4}$ that
    $$|\mathcal{P}| \geq \tfrac{1}{12}(u(2u-3w-1)+3w(w-1)+2).$$

If $\Delta(H) \leq w-8$ then $\mathcal{P}$ is a triangle packing of $L$ with the required
properties, so we may assume $\Delta(H) \geq w-6$. It suffices to show that we can find a triangle
packing $\mathcal{P}'$ of $L$ with a leave $H'$ such that $|E(H')|=\frac{w(u-w+1)}{2}$,
$\Delta(H') \leq \Delta(H)$ and $H'$ has fewer vertices of degree $\Delta(H)$ than $H$, because
then, by repeating this procedure, we will eventually obtain a triangle packing of $L$ with the
required properties.

Let $a$ be a vertex such that $\deg_{H}(a)=\Delta(H)$, let $A=\Nbd_{H}(a)$, let $S=\{x \in U:
\deg_{H}(x) \geq w-8\}$ and let $\mathcal{Q}=\{C \in \mathcal{P}: |V(C) \cap S| \geq 1\}$. Clearly
$|S| \leq \frac{w(u-w+1)}{w-8}$. Also, note that each vertex in $S$ occurs in at most
$\frac{1}{2}((u-1)-(w-8))=\frac{u-w+7}{2}$ triangles in $\mathcal{P}$. The proof now splits into
two cases according to whether $E(H) \cap E(K_A) = \emptyset$.

{\bf Case 1.} Suppose that there is an edge $xy$ in $E(H) \cap E(K_A)$. Because $|S| \leq
\frac{w(u-w+1)}{w-8}$ and each vertex in $S$ occurs in at most $\frac{u-w+7}{2}$ triangles in
$\mathcal{P}$, we have that $|\mathcal{Q}| \leq \frac{w(u-w+1)(u-w+7)}{2(w-8)}$. Thus, using our
lower bound on $|\mathcal{P}|$ it is routine to check that $|\mathcal{P}| > |\mathcal{Q}|$ for $u
\geq 52$. It follows that there is a triangle $C$ in $\mathcal{P} \setminus \mathcal{Q}$. It is
easy to see that
$$\mathcal{P}' = (\mathcal{P} \setminus \{C\}) \cup \{(x,y,a)\}$$
is a triangle packing of $L$ with the required properties.

{\bf Case 2.} Suppose that $E(H) \cap E(K_A) = \emptyset$. It follows that each vertex in $A$ has
degree in $H$ at most $u-|A| \leq u-w+6$, which for $u \geq 42$ implies degree at most $w-10$.
Thus, $A$ and $S$ are disjoint sets, and $|S| \leq u-|A| \leq u-w+6$. Because $|S \setminus \{a\}|
\leq u-w+5$ and each vertex in $S$ occurs in at most $\frac{u-w+7}{2}$ triangles in $\mathcal{P}$,
we have that at most $\frac{(u-w+5)(u-w+7)}{2}$ triangles in $\mathcal{P}$ contain a vertex in $S
\setminus \{a\}$ and hence that at most $\frac{(u-w+5)(u-w+7)}{2}$ edges of $K_A$ are in triangles
in $\mathcal{Q}$. Obviously, at most $|E(L^c)| \leq \frac{w(u-w+1)-u-2}{4}$ edges of $K_A$ are in
$E(L^c)$. Thus, because each edge of $K_A$ is either in $E(L^c)$ or in a triangle in $\mathcal{P}$
and because it is routine to check that, for $u \geq 62$,
$$|E(K_A)| \geq \mbinom{w-6}{2} > \mfrac{(u-w+5)(u-w+7)}{2}+\mfrac{w(u-w+1)-u-2}{4},$$
we have that some edge $xy \in E(K_A)$ is in a triangle $(x,y,z)$ in $\mathcal{P} \setminus
\mathcal{Q}$. It is easy to see that
$$\mathcal{P}' = (\mathcal{P} \setminus \{(x,y,z)\}) \cup \{(x,y,a)\}$$
is a triangle packing of $L$ with the required properties. \end{proof}

In order to prove Lemma \ref{singleSparseLeaveEmbedding} we first use a proper edge colouring of a
subgraph $L^*$ to obtain a cycle packing of $L^* \vee K_{w-5}$ consisting of triangles, each of
which contains exactly two vertices in $V(L^*)$, and one or two longer cycles. We then repeatedly
apply Lemma \ref{getTriangle} until we obtain a cycle packing of $L^* \vee K_{w-5}$ to which we
can apply Lemma \ref{finishOff} and thus complete the proof. Lemma \ref{singleSparseLeaveHelper}
is a technical result that is used only in the proof of Lemma \ref{singleSparseLeaveEmbedding}.

\begin{lemma}\label{singleSparseLeaveHelper}
Let $L^*$ be a graph of order $u \geq 32$ and let $w$ be an integer such that $w = \tfrac{3u+k}{5}$
for some $k \in \{17,19,21,23\}$. If $|E(L^*)|=\frac{w(u-w+1)}{2}$, $\Delta(L^*) \leq w-8$ and
$\deg_{L^*}(x) \equiv u+1 \mod{2}$ for each $x \in V(L^*)$, then there is a decomposition
$\{F,F_1,\ldots,F_{w-7}\}$ of $L^*$ such that
\begin{itemize}
    \item
$|E(F)|=u-w+1$ and each component of $F$ is an even-length path or an even-length cycle;
    \item
$F_i$ is a matching for $i \in \{1,\ldots,w-7\}$; and
    \item
there is a proper subset $D$ of $U \setminus V(F)$ such that $|D|=\frac{5w-3u-13}{2}$, $|D
\setminus V(F_{w-7})| \geq 2$, and $\deg_{L^*}(x) \leq w-10$ for each $x \in D$.
\end{itemize}
\end{lemma}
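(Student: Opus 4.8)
The plan is to build the decomposition in two stages: first extract the subgraph $F$ and a small set $D$ of low-degree vertices outside $V(F)$, and then properly edge-colour what remains to produce the matchings $F_1,\ldots,F_{w-7}$, being careful to leave at least two vertices of $D$ uncovered by the last colour class. First I would isolate the set of ``high-degree'' vertices: let $S = \{x \in V(L^*): \deg_{L^*}(x) \geq w-9\}$ (or a similar threshold just below $\Delta(L^*) \leq w-8$). A counting argument using $|E(L^*)| = \frac{w(u-w+1)}{2}$ shows $|S|$ is small — roughly $O(u-w)$ — and in particular $|S|$ is much smaller than $\frac{u-w+1}{2}$, the required size of $V(F) \cap$ anything, so there is plenty of room to choose vertices avoiding $S$. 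The point of this is that $D$ must consist of vertices of degree at most $w-10$, and we also want flexibility in choosing $F$.

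Next I would invoke Lemma \ref{getSubgraph} (whose hypotheses $|E(L^*)| \geq \frac{w(u-w+1)}{2}$ and $\Delta(L^*) \leq w-8$ hold here, with $u \geq 32 \geq 22$) to obtain a subgraph $F$ with $|E(F)| = u-w+1$ whose components are even-length paths and even-length cycles. Then I would choose $D \subseteq V(L^*) \setminus V(F)$ of size $\frac{5w-3u-13}{2}$ consisting of vertices of degree at most $w-10$; I need to check there are enough such vertices outside $V(F)$. Since $|V(F)| \leq 2(u-w+1)$ and the number of vertices of degree $\geq w-9$ is small, a count shows $V(L^*) \setminus V(F)$ contains at least $\frac{5w-3u-13}{2}$ vertices of degree $\leq w-10$ — here one uses that $u,w$ satisfy $w = \frac{3u+k}{5}$ and $u$ is large, making $\frac{5w-3u-13}{2}$ a genuinely small quantity (on the order of $\frac{k-13}{2} \in \{2,3,4,5\}$). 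Actually $\frac{5w-3u-13}{2} = \frac{k-13}{2}$, so $|D|$ is a tiny constant, which makes this step easy.

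For the edge-colouring stage, consider $L^* - F$; every vertex has degree $\deg_{L^*}(x) - \deg_F(x)$, which is at most $\Delta(L^*) \leq w-8$, and since $\deg_F(x) \leq 2$ we get $\Delta(L^*-F) \leq w-8$. By Vizing's theorem $\chi'(L^*-F) \leq w-7$, so $L^*-F$ decomposes into matchings $F_1,\ldots,F_{w-7}$ (pad with empty matchings if fewer are needed). The remaining subtlety is the condition $|D \setminus V(F_{w-7})| \geq 2$: I would argue that since each vertex of $D$ has degree at most $w-10$ in $L^*$ and hence at most $w-10 < w-7$ in $L^*-F$, each $x \in D$ is missed by at least three colour classes; a short averaging or greedy-reassignment argument then lets us permute the colour classes so that at least two vertices of $D$ are absent from $F_{w-7}$ (e.g. if $|D| \leq 2$ pick $F_{w-7}$ to avoid all of $D$ using the fact that the matchings incident to $D$-vertices number at most $|D|(w-10) < w-7$; if $|D| \geq 3$, note some colour class must avoid at least two elements of $D$ by pigeonhole since no single matching can cover all of a set on which every vertex has spare colours).

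The main obstacle I expect is the bookkeeping in the final step — ensuring the $D\setminus V(F_{w-7})\geq 2$ condition survives while simultaneously keeping $F$ with exactly $u-w+1$ edges and $D$ disjoint from $V(F)$ with the exact prescribed size. None of the individual estimates is hard, but one must be careful that the vertex-count inequalities (enough low-degree vertices outside $V(F)$, enough spare colours at $D$-vertices) are all simultaneously satisfiable; this is where the hypothesis $u \geq 32$ and the precise form $w = \frac{3u+k}{5}$ get used, and it is largely a routine-inequality check of the kind the paper defers to the reader.
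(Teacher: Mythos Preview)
Your overall architecture matches the paper's: extract $F$ via Lemma~\ref{getSubgraph}, colour $L^*-F$ by Vizing into $w-7$ matchings, and permute the colour classes so that $F_{w-7}$ misses two vertices of $D$. The gaps are in the two ``routine'' steps you hand-wave.

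First, the count you use to find $D$ does not work. The bound $|S|(w-8)\le 2|E(L^*)|=w(u-w+1)$ gives only $|S|\le \frac{w(u-w+1)}{w-8}\approx u-w+1$, which is \emph{larger} than $\tfrac{u-w+1}{2}$, not ``much smaller''; combined with $|V(F)|\le\tfrac32(u-w+1)$ the subtraction $(u-|S|)-|V(F)|$ is asymptotically nonpositive, so you cannot conclude that any low-degree vertices remain outside $V(F)$. The paper repairs this by a case split on whether the $\tfrac{u-w+1}{2}$ vertices of highest degree (call this set $M$) all have degree $\ge u-w$. If so, Lemma~\ref{getSubgraphEasy} is invoked to force $M\subseteq V(F)$, which is precisely the missing idea that pushes high-degree vertices into $F$. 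In either case the paper then shows the five lowest-degree vertices $D'$ outside $V(F)$ satisfy the much stronger bound $\sum_{x\in D'}\deg_{L^*}(x)\le 5(u-w+1)$, not merely $\deg_{L^*}(x)\le w-10$.

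Second, your pigeonhole for $|D\setminus V(F_{w-7})|\ge 2$ fails numerically. The claimed inequality $|D|(w-10)<w-7$ is false already for $|D|=2$ and $w=23$ (it reads $26<16$), and the alternative averaging with the bound $\deg_{L^*}(x)\le w-10$ yields only $\sum_{x\in D}|\{i:x\notin V(F_i)\}|\ge 3|D|\le 15$, which does not exceed $w-7$. The paper's fix is to work with $|D'|=5$ and the sharper degree bound above, giving $\sum_{x\in D'}|\{i:x\notin V(F_i)\}|\ge 5(w-7)-5(u-w+1)>w-7$; only then does pigeonhole produce a colour class missing two vertices of $D'$, and $D$ is taken as a suitable subset of $D'$.
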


\begin{proof} Let $U=V(L^*)$ and let $M$ be a subset of $U$ such that $|M|=\frac{u-w+1}{2}$ and
$\deg_{L^*}(x) \geq \deg_{L^*}(y)$ for each $x \in M$ and $y \in U \setminus M$. By Lemma
\ref{getSubgraph}, there is a subgraph $F$ of $L^*$ such that $|E(F)|=u-w+1$ and each component of
$F$ is an even-length path or an even-length cycle. Further, by Lemma \ref{getSubgraphEasy}, we
can assume that $M \subseteq V(F)$ if $\deg_{L^*}(x) \geq u-w$ for each $x \in M$. Since
$\Delta(L^*-F) \leq \Delta(L^*) \leq w-8$ there is a decomposition $\{F_1,\ldots,F_{w-7}\}$ of
$L^*-F$ into $w-7$ matchings by Vizing's theorem \cite{Vi}.

It is easy to check that $u-|V(F)| \geq 6$ using $|V(F)| \leq \frac{3}{2}|E(F)|$. Let $D'$ be a
subset of $U \setminus V(F)$ such that $|D'|=5$ and $\deg_{L^*}(x) \leq \deg_{L^*}(y)$ for each $x
\in D'$ and $y \in U \setminus (V(F) \cup D')$. Note that $D'$ is a proper subset of $U \setminus
V(F)$. It suffices to show that $\deg_{L^*}(x) \leq w-10$ for each $x \in D'$ and that $|D'
\setminus V(F_i)| \geq 2$ for some $i \in \{1,\ldots,w-7\}$. This is because we can reindex the
matchings in $\{F_1,\ldots,F_{w-7}\}$, if necessary, so that $|D' \setminus V(F_{w-7})| \geq 2$,
and take $D$ to be a subset of $D'$ such that $|D|=\frac{5w-3u-13}{2}$ and $|D \setminus
V(F_{w-7})| \geq 2$ (note that $\frac{5w-3u-13}{2} \in \{2,3,4,5\}$). The proof splits into two
cases according to whether $\deg_{L^*}(x) \geq u-w$ for each $x \in M$.

{\bf Case 1.} Suppose that $\deg_{L^*}(x) \leq u-w-1$ for some $x \in M$. It is easy to check that
$u-|V(F)| \geq |M| + |D'|$ using $|V(F)| \leq \frac{3}{2}|E(F)|$, and this implies that $M \cap
D'=\emptyset$ and hence that $\deg_{L^*}(x) \leq u-w-1$ for each $x \in D'$. Since $u-w-1 \leq
w-10$ for $u \geq 11$, we have $\deg_{L^*}(x) \leq w-10$ for each $x \in D'$. Furthermore, using
the fact that $\deg_{L^*}(x) \leq u-w-1$ for each $x \in D'$, it can be shown that
$$\medop\sum_{x \in D'} |\{i:i \in \{1,\ldots,w-7\},x \notin V(F_i)\}| \geq 5(w-7)-5(u-w-1) > w-7,$$
which implies that $|D' \setminus V(F_i)| \geq 2$ for some $i \in \{1,\ldots,w-7\}$. (Informally,
we are evaluating the sum over the vertices in $D'$ of the number of matchings ``missing'' at a
vertex and then concluding that some matching must be missing at two vertices in $D'$.)

{\bf Case 2.} Suppose that $\deg_{L^*}(x) \geq u-w$ for each $x \in M$. Then $M \subseteq V(F)$.
Consider the set $S=(U \setminus V(F)) \cup M$ and note that $|S| \geq w-1$ using $|V(F)| \leq
\frac{3}{2}|E(F)|$. The definitions of $D'$ and $M$ imply that $\deg_{L^*}(x) \leq \deg_{L^*}(y)$
for each $x \in D'$ and $y \in S \setminus D'$. Also note that $\sum_{x \in S} \deg_{L^*}(x) \leq
(w-1)(u-w+1)$, since $|E(L^*)| = \frac{w(u-w+1)}{2}$ and $\sum_{x \in V(F) \setminus M} \deg_F(x)
\geq 2|E(F)|-2|M| \geq u-w+1$. So, because $|S| \geq w-1$ and $|D'| = 5$, it can be seen that
$\max(\{\deg_{L^*}(x):x\in D'\}) \leq \frac{(w-1)(u-w+1)}{w-5}$ and $\sum_{x \in D'} \deg_{L^*}(x)
\leq 5(u-w+1)$. As in Case 1, it is routine to use these facts to show that, for $u \geq 32$,
$\deg_{L^*}(x) \leq w-10$ for each $x \in D$ and that $|D' \setminus V(F_i)| \geq 2$ for some $i
\in \{1,\ldots,w-7\}$. \end{proof}

\begin{lemma}\label{singleSparseLeaveEmbedding}
Let $L^*$ be a graph of order $u \geq 32$ and let $w$ be an integer such that $w = \tfrac{3u+k}{5}$
for some $k \in \{17,19,21,23\}$. If $|E(L^*)|=\frac{w(u-w+1)}{2}$, $\Delta(L^*) \leq w-8$ and
$\deg_{L^*}(x) \equiv u+1 \mod{2}$ for each $x \in V(L^*)$, then there is a triangle decomposition
of $L^* \vee K_w$.
\end{lemma}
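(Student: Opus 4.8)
The plan is to build a cycle packing of $L^* \vee K_T$ with $T$ a set of $w-5$ vertices, that satisfies all three hypotheses of Lemma~\ref{finishOff}, and then invoke that lemma to produce the desired triangle decomposition of $L^* \vee K_w$. First I would apply Lemma~\ref{singleSparseLeaveHelper} to obtain a decomposition $\{F,F_1,\ldots,F_{w-7}\}$ of $L^*$, where $F$ has $u-w+1$ edges and is a disjoint union of even-length paths and cycles, each $F_i$ is a matching, and there is a set $D$ with $|D| = \frac{5w-3u-13}{2}$, $|D \setminus V(F_{w-7})| \ge 2$, and every vertex of $D$ has small degree in $L^*$. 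Since $F$ is a disjoint union of even-length paths and cycles, each of its components can be partitioned into two matchings $F^{(1)}$ and $F^{(2)}$; together with $F_1,\ldots,F_{w-7}$ this gives a proper edge colouring of $L^*$ with $w-5 = |T|$ matchings. Writing $T = \{t_1,\ldots,t_{w-5}\}$, I would form the initial packing $\mathcal{P}_0$ of $L^* \vee K_T$ whose triangles are, for each colour class (matching) $N_j$ and each edge $xy \in N_j$, the triangle $(x,y,t_j)$. The leave of $\mathcal{P}_0$ is $F \cup K_T \cup G_0$ where $G_0 \subseteq K_{V(L^*),T}$ records which $(vertex, colour)$ pairs were not used; crucially $F$ is untouched and $\deg_{G_0}(x) = \deg_{L^*}(x) - \deg_{L^*-F}(x) = \deg_F(x)$ counting carefully—one needs $\deg_{G_0}(x) = (w-5) - \deg_{L^*}(x) + \deg_F(x)$, so this requires keeping track of how the matchings partition each vertex's edges.

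The key is that the leave of $\mathcal{P}_0$ still contains the complete graph $K_T$ together with many edges of $K_{V(L^*),T}$, and $\mathcal{P}_0$ has no cycle of length exceeding $3$. I would then repeatedly apply Lemma~\ref{getTriangle}: at each step, as long as the current leave $G$ has an edge inside $T$, every cycle of $\mathcal{P}$ meeting $E(K_T)$ is a triangle with one $V(L^*)$-vertex, and some vertex $y \in V(L^*)$ has at least two $T$-neighbours in $G$ while every vertex has at least one, we extract a new triangle $C$ with $V(C) \cap V(L^*) = \{y\}$ while keeping the packing equivalent on $V(L^*)$. Choosing the vertices $y$ in a controlled order—using the vertices of $D$ (which have low $L^*$-degree and hence many $T$-edges in the leave) to absorb the edges of $K_T$—I aim to drive the leave towards exactly the configuration demanded by the second and third bullets of Lemma~\ref{finishOff}: all remaining long cycles sit inside $K_{D,T}$, each vertex of $D$ lies in exactly one, $G$ becomes a subgraph of $K_{V(L^*),T}$ once all $K_T$-edges are consumed, and the degree condition $\deg_G(x) = \deg_F(x)+1$ on $V(F)$, $\deg_G(x) = 3$ on $D \setminus \{a_1,a_2\}$, $\deg_G(x) = 1$ elsewhere is met. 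The condition $|D \setminus V(F_{w-7})| \ge 2$ supplies the two special vertices $a_1,a_2$ of degree $1$.

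The main obstacle will be the bookkeeping needed to guarantee that Lemma~\ref{getTriangle} remains applicable at every step and that the process terminates in \emph{exactly} the leave structure prescribed by Lemma~\ref{finishOff}. Concretely one must verify: (i) that $F$ is genuinely untouched throughout—this is ensured because every application of Lemma~\ref{getTriangle} produces a packing equivalent to the previous one on $V(L^*)$, so edges of $F$ (which lie inside $V(L^*)$) can never move; (ii) that while $E(G) \cap E(K_T) \neq \emptyset$ there is always a vertex $y \in V(L^*)$ with $|\Nbd_G(y) \cap T| \ge 2$ and every $x \in V(L^*)$ has $|\Nbd_G(x) \cap T| \ge 1$—this needs a careful count showing the total number of $K_{V(L^*),T}$-edges in the leave stays large enough, using $|E(L^*)| = \frac{w(u-w+1)}{2}$ and the degree bounds, and that one can always pick $y \in D$ until the $K_T$-edges are exhausted; and (iii) that after the $K_T$-edges are gone, the degree sequence of $G$ on $V(L^*)$ is precisely as required, and each vertex of $D$ ends up in exactly one long cycle with the right multiplicities. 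Establishing (ii) and (iii) simultaneously—i.e.\ choosing which triangle to extract at each step so that the process lands on the target configuration rather than overshooting—is the delicate part; the low degrees of the vertices of $D$ and the slack in $\Delta(L^*) \le w-8$ provide the room to do this, and the size $|D| = \frac{5w-3u-13}{2}$ is exactly calibrated to match the count of excess edges of $K_T$ that must be absorbed. The lower bound $u \ge 32$ is used throughout these counts and to ensure $\frac{5w-3u-13}{2} \ge 2$ and $10 \le w \le u-1$.
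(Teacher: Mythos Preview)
Your overall plan---build a packing of $L^*\vee K_T$ with $|T|=w-5$ and then invoke Lemma~\ref{finishOff}---matches the paper, but two essential steps are missing or wrong.

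First, there is an internal contradiction about $F$. You propose to split $F$ into two matchings $F^{(1)},F^{(2)}$ and use these, together with $F_1,\ldots,F_{w-7}$, as $w-5$ colour classes to form triangles $(x,y,t_j)$. But then every edge of $F$ lies in some triangle of $\mathcal{P}_0$, so $F$ is \emph{not} in the leave; your next sentence asserts the leave is $F\cup K_T\cup G_0$, which cannot hold. The paper avoids this by using only $F_1,\ldots,F_{w-7}$ to build $\mathcal{T}_0$, so that $z_{w-6},z_{w-5}$ are initially unused by triangles and $F$ remains intact in the leave.

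Second, and more seriously, you never construct the long cycles that Lemma~\ref{finishOff} requires in the packing. That lemma demands that each vertex of $D$ lie in exactly one cycle of length at least $4$ in $\mathcal{P}$; since $|D|=\tfrac{5w-3u-13}{2}\geq 2$, such cycles must exist. In the paper these are put in by hand at the very start as $\mathcal{C}_0$ (explicit $4$- and $6$-cycles through $D$ and $z_{w-5},z_{w-6},z_{w-7}$). You appear to hope they will ``emerge'' from repeated applications of Lemma~\ref{getTriangle}, but that lemma yields a packing equivalent to the old one on $V(L)$, which in particular preserves the multiset of cycle lengths; each application then adjoins one further triangle. Starting from a packing of triangles only, you can never obtain a cycle of length $\geq 4$, so the hypothesis of Lemma~\ref{finishOff} is unreachable. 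Incidentally, the condition $|D\setminus V(F_{w-7})|\geq 2$ is not there to supply $a_1,a_2$ (any two vertices of $D$ will do for that); it is what allows $\mathcal{C}_0$ to use the edges $d_1z_{w-7}$ and $d_2z_{w-7}$ without clashing with the triangles coming from $F_{w-7}$.
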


\begin{proof} Let $U=V(L^*)$ and let $T=\{z_1,\ldots,z_{w-5}\}$ be a set of vertices which is disjoint
from $U$. By Lemma \ref{singleSparseLeaveHelper}, there is a decomposition
$\{F,F_1,\ldots,F_{w-7}\}$ of $L^*$ such that
\begin{itemize}
    \item
$|E(F)|=u-w+1$ and each component of $F$ is an even-length path or an even-length cycle;
    \item
$F_i$ is a matching for $i \in \{1,\ldots,w-7\}$; and
    \item
there is a proper subset $D$ of $U \setminus V(F)$ such that $|D|=\frac{5w-3u-13}{2}$, $|D
\setminus V(F_{w-7})| \geq 2$, and $\deg_{L^*}(x) \leq w-10$ for each $x \in D$.
\end{itemize}
Let $D=\{d_1,\ldots,d_s\}$, where $s = \frac{5w-3u-13}{2}$, and let
    $$\mathcal{C}_0 =
    \left\{
    \begin{array}{ll}
        \{(d_1,z_{w-5},d_2,z_{w-6})\}, & \hbox{if $|D|=2$;} \\
        \{(d_1,z_{w-7},d_2,z_{w-6},d_3,z_{w-5})\}, & \hbox{if $|D|=3$;} \\
        \{(d_1,z_{w-5},d_2,z_{w-6}),(d_3,z_{w-5},d_4,z_{w-6})\}, & \hbox{if $|D|=4$;} \\
        \{(d_1,z_{w-7},d_2,z_{w-6},d_3,z_{w-5}),(d_4,z_{w-5},d_5,z_{w-6})\}, & \hbox{if $|D|=5$.}
    \end{array}
    \right.$$
Note that $\bigcup_{C \in \mathcal{C}_0}E(C) \subseteq E(K_{D,\{z_{w-6},z_{w-5}\}}) \cup
\{d_1z_{w-7},d_2z_{w-7}\}$ and that each vertex in $D$ is in exactly one cycle in $\mathcal{C}_0$.
Let
    $$\mathcal{T}_0 = \{(x,y,z_i):i \in \{1,\ldots,w-7\}, xy \in E(F_i)\}.$$
It is routine to verify that $\mathcal{T}_0 \cup \mathcal{C}_0$ is a cycle packing of $L^* \vee
K_T$ whose leave is $F \cup G_0$ for some subgraph $G_0$ of $K_{U,T} \cup K_T$.

Choose two distinct vertices $a_1,a_2 \in D$ (note that $|D| \geq 2$). Define a function $\tau: U
\rightarrow \{1,2,3\}$ by
    $$\tau(x)= \left\{
    \begin{array}{ll}
        \deg_{F}(x)+1, & \hbox{if $x \in V(F)$;} \\
        3, & \hbox{if $x \in D \setminus \{a_1,a_2\}$;} \\
        1, & \hbox{otherwise.}
    \end{array}
    \right.$$

Note that we have $\deg_{G_0}(x)=(w-5)-\deg_{L^*-F}(x)$ for each $x \in U \setminus D$ and
$\deg_{G_0}(x)=(w-7)-\deg_{L^*-F}(x)$ for each $x \in D$. Thus, because $\deg_{L^*}(x) \equiv u+1
\mod{2}$ for each $x \in U$ and $w \equiv u+1 \mod {2}$, it can be seen that $\deg_{G_0}(x) \equiv
\tau(x) \mod{2}$ for each $x \in U$.  Further, using the fact that $\deg_{L^*}(x) \leq w-8$ for
each $x \in U \setminus D$ and $\deg_{L^*}(x) \leq w-10$ for each $x \in D$, we have that
$\deg_{G_0}(x) \geq 3 \geq \tau(x)$ for each $x \in U$. Also, using the definition of $\tau$ and
our expressions for $|D|$, $|E(L^*)|$ and $|E(F)|$, we obtain
$$\medop\sum_{x \in U}(\deg_{G_0}(x)-\tau(x))=u(w-5)-2|D|-2(|E(L^*)|-|E(F)|)-\medop\sum_{x \in U}\tau(x)=2\mbinom{w-5}{2}.$$

Let $\mathcal{P}_0=\mathcal{T}_0 \cup \mathcal{C}_0$ and $r=\binom{w-5}{2}$. We claim that for
each $i \in \{0,\ldots,r\}$ there is a packing $\mathcal{P}_i$ of $L \vee K_{T}$ with leave $F
\cup G_i$ for some subgraph $G_i$ of $K_{U,T} \cup K_T$ such that the following hold.
\begin{itemize}
    \item
$|E(G_i \cap K_T)|=\binom{w-5}{2}-i$.
    \item
$\deg_{G_i}(x) \equiv \tau(x) \mod{2}$ and $\deg_{G_i}(x) \geq \tau(x)$ for each $x \in U$.
    \item
Every cycle in $\mathcal{P}_i$ that contains an edge in $K_T$ is a triangle with one vertex in
$V(L)$.
    \item
$\sum_{x \in U} (\deg_{G_i}(x) - \tau(x)) = 2(\binom{w-5}{2}-i)$.
    \item
Each cycle of length at least 4 in $\mathcal{P}_i$ is a subgraph of $K_{V(L),T}$, each vertex in
$D$ is in exactly one such cycle, and each vertex in $V(L) \setminus D$ is not in any such
cycle.
\end{itemize}
To see that these cycle packings exist suppose inductively that a cycle packing $\mathcal{P}_k$
with the required properties exists for some $k \in \{0,\ldots,r-1\}$. We will show that we can
construct a packing $\mathcal{P}_{k+1}$ with the required properties. The properties of
$\mathcal{P}_k$ imply that there is a vertex $y_k \in U$ such that $\deg_{G_i}(y_k) \geq
\tau(y_k)+2$. Let $\mathcal{P}_{k+1}$ be the packing of $L \vee K_{T}$ obtained by applying Lemma
\ref{getTriangle} to $\mathcal{P}_{k}$, taking $y=y_k$, and note that by Lemma \ref{getTriangle}
there is a triangle $C_{k+1} \in \mathcal{P}_{k+1}$ such that $V(C_{k+1}) \cap U = \{y_k\}$ and
$\mathcal{P}_{k+1} \setminus \{C_{k+1}\}$ is equivalent to $\mathcal{P}_{k}$ on $U$. From this it
can be seen that $\mathcal{P}_{k+1}$ has the required properties (note that
$\deg_{G_{k+1}}(y_{k})=\deg_{G_{k}}(y_{k})-2$ and $\deg_{G_{k+1}}(x)=\deg_{G_{k}}(x)$ for each $x
\in U \setminus \{y_{k}\}$).

Consider the cycle packing $\mathcal{P}_r$. From its properties we have that $E(G_r \cap
K_T)=\emptyset$, that $\deg_{G_r}(x) = \tau(x)$ for each $x \in U$, that each cycle of length at
least 4 in $\mathcal{P}_i$ is a subgraph of $K_{D,T}$ and that each vertex in $D$ is in exactly
one such cycle. Thus we can apply Lemma \ref{finishOff} to $\mathcal{P}_r$ to obtain a triangle
decomposition of $L^* \vee K_w$.
\end{proof}



Combining Lemmas \ref{makeLeaveSparse} and \ref{singleSparseLeaveEmbedding}, we obtain the
following result.

\begin{lemma}\label{singleEmbedding}
Let $L$ be a graph of order $u \geq 62$, let $w$ be an integer such that $w = \tfrac{3u+k}{5}$ for
some $k \in \{17,19,21,23\}$, and suppose that $(L,w)$ is admissible. If $|E(L)| \geq
\binom{u}{2}-\frac{w(u-w+1)-u-2}{4}$, there is a triangle decomposition of $L \vee K_w$.

\end{lemma}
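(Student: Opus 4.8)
The plan is to combine Lemmas \ref{makeLeaveSparse} and \ref{singleSparseLeaveEmbedding} directly, exactly as foreshadowed in the discussion opening Section \ref{constructionSection}. First I would apply Lemma \ref{makeLeaveSparse}: its hypotheses ($u \geq 62$, $w = \tfrac{3u+k}{5}$ with $k \in \{17,19,21,23\}$, $(L,w)$ admissible, and $|E(L)| \geq \binom{u}{2}-\tfrac{w(u-w+1)-u-2}{4}$) are precisely those assumed here, so there is a triangle packing $\mathcal{P}$ of $L$ whose leave $L^*$ satisfies $|E(L^*)| = \tfrac{w(u-w+1)}{2}$ and $\Delta(L^*) \leq w-8$. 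Note that $V(L^*)=V(L)$ and that $\mathcal{P}$ consists of $\tfrac13(|E(L)|-|E(L^*)|)$ triangles, each a subgraph of $L$, whose edge sets partition $E(L)\setminus E(L^*)$.

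Next I would feed $L^*$ into Lemma \ref{singleSparseLeaveEmbedding}. Its numerical hypotheses ($u \geq 32$, the prescribed form of $w$, $|E(L^*)|=\tfrac{w(u-w+1)}{2}$, and $\Delta(L^*)\leq w-8$) hold immediately. The only condition needing a word of justification is the parity requirement $\deg_{L^*}(x)\equiv u+1\mod{2}$ for every $x\in V(L^*)$: since $(L,w)$ is admissible we have $\deg_L(x)\equiv u+1\mod{2}$ for each $x\in V(L)$, and deleting triangles from $L$ changes each vertex degree by $0$ or $2$, so this parity is inherited by $L^*$. Lemma \ref{singleSparseLeaveEmbedding} then yields a triangle decomposition $\mathcal{D}$ of $L^* \vee K_w$, where $K_w$ is taken on a set $W$ of $w$ vertices disjoint from $V(L)$.

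Finally I would observe that $\mathcal{P}\cup\mathcal{D}$ is the desired triangle decomposition of $L\vee K_w$. Indeed, the edge sets of the triangles in $\mathcal{D}$ partition $E(L^*)\cup E(K_{V(L),W})\cup E(K_W)$, while those in $\mathcal{P}$ partition $E(L)\setminus E(L^*)$; since $E(L^*)\subseteq E(L)$, these two collections are edge-disjoint and their union partitions $E(L)\cup E(K_{V(L),W})\cup E(K_W)=E(L\vee K_w)$, as required.

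There is essentially no obstacle here: all of the substantive work has been done in Lemmas \ref{makeLeaveSparse} and \ref{singleSparseLeaveEmbedding}, and the only point requiring care is verifying that the hypotheses of the latter — in particular the vertex-degree parity condition — are met by the leave $L^*$ produced by the former. The choice $|E(L^*)|=\tfrac{w(u-w+1)}{2}$ forced in Lemma \ref{makeLeaveSparse} is precisely what makes the two lemmas dovetail.
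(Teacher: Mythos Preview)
Your proposal is correct and follows exactly the same approach as the paper's proof: apply Lemma~\ref{makeLeaveSparse} to obtain the triangle packing $\mathcal{P}$ with leave $L^*$, verify (via admissibility of $(L,w)$ and the fact that removing triangles preserves degree parity) that $L^*$ meets the hypotheses of Lemma~\ref{singleSparseLeaveEmbedding}, apply that lemma to get a triangle decomposition $\mathcal{D}$ of $L^*\vee K_w$, and take $\mathcal{P}\cup\mathcal{D}$. Your write-up is in fact a bit more explicit than the paper's in justifying the parity condition and the edge-disjointness of $\mathcal{P}$ and $\mathcal{D}$.
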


\begin{proof}
We can apply Lemma \ref{makeLeaveSparse} to obtain a triangle packing $\mathcal{P}$ of $L$ with a
leave $L^*$ such that $|E(L^*)|=\frac{w(u-w+1)}{2}$ and $\Delta(L^*) \leq w-8$. Because $(L,w)$ is
admissible, we obviously also have $\deg_{L^*}(x) \equiv u+1$ for each $x \in V(L^*)$. So we can
apply Lemma \ref{singleSparseLeaveEmbedding} to obtain a triangle decomposition $\mathcal{D}$ of
$L^* \vee K_w$. Clearly $\mathcal{P} \cup \mathcal{D}$ is a triangle decomposition of $L \vee
K_w$. \end{proof}

\section{Main Result}\label{mainResultSection}

By applying Lemma \ref{singleEmbedding} with $L$ chosen judiciously, we can obtain our main
results without too much further effort.

\begin{proof}[{\rm\bf Proof of Theorem \ref{MainTheorem}}]
Let $(U,\mathcal{A})$ be a partial Steiner triple system of order $u \geq 62$ such that
$|\mathcal{A}| \leq \frac{u^2}{50}-\frac{11u}{100}-\frac{116}{75}$, and let $v$ be an integer such
that $v \geq \frac{8u+17}{5}$ and $v \equiv 1,3 \mod{6}$. Let $L$ be the leave of
$(U,\mathcal{A})$. Let $u'=\frac{5v-k}{8}$ and $w'=\frac{3v+k}{8}$ where $k=21,23,17,19$ when $v
\equiv 1,3,5,7 \mod{8}$ respectively. It is easy to see that $u'$ and $w'$ are integers such that
$u'+w'=v$, $u' \geq u \geq 62$ and $w' = \tfrac{3u'+k}{5}$. Consider the graph $L'=L \vee
K_{u'-u}$ ($L'=L$ if $u'=u$) and note that $(L',w')$ is admissible because $(L,v-u)$ is
admissible. It is routine to check that $|\mathcal{A}| \leq
\frac{u^2}{50}-\frac{11u}{100}-\frac{116}{75}$ implies that $|E(L')| \geq
\binom{u'}{2}-\frac{w'(u'-w'+1)-u'-2}{4}$. Thus we can apply Lemma \ref{singleEmbedding} to
produce a triangle decomposition of $L' \vee K_{w'}$. Because $L' \vee K_{w'}$ is isomorphic to $L
\vee K_{v-u}$, the proof is complete.
\end{proof}

\begin{proof}[\rm\bf Proof of Theorem \ref{NWTheorem}]
Note that $v$ is odd because $G$ is even and contains vertices of degree $v-1$. Let
$u'=\frac{5v-k}{8}$ and $w'=\frac{3v+k}{8}$ where $k=21,23,17,19$ when $v \equiv 1,3,5,7 \mod{8}$
respectively. It is easy to see that $u'$ and $w'$ are integers such that $u'+w'=v$, $u' \geq 62$
and $w' = \tfrac{3u'+k}{5}$. Our hypotheses imply that $G$ has at least $w'$ vertices of degree
$v-1$, so $G$ is isomorphic to $L' \vee K_{w'}$ for some graph $L'$ of order $u'$. It follows from
the facts that $G$ is even and $|E(G)| \equiv 0 \mod 3$ that $(L',w')$ is admissible. It is
routine to check that $|E(G)| \geq \binom{v}{2}-(\frac{3v^2}{128}-\frac{27v}{64}-\frac{409}{128})$
implies that $|E(L')| \geq \binom{u'}{2}-\frac{w'(u'-w'+1)-u'-2}{4}$. So we can apply Lemma
\ref{singleEmbedding} to obtain a triangle decomposition of $L' \vee K_{w'}$. Because $L' \vee
K_{w'}$ is isomorphic to $G$, the proof is complete.
\end{proof}


We conclude by establishing the existence of partial Steiner triple systems with specified numbers
of triples which do not have any embeddings of order close to $u$.

\begin{lemma}\label{noEmbed}
Let $u$ and $w$ be positive integers such that $u+w$ is odd and $w \leq u-5$. There is a partial
Steiner triple system $(U,\mathcal{A})$ of order $u$ such that every embedding of
$(U,\mathcal{A})$ has order at least $u+w$ and
$$|\mathcal{A}|= \left\{
  \begin{array}{ll}
    \tfrac{1}{6}(3u+w^2-4w-3), & \hbox{if $w \equiv 1,3 \mod{6}$;} \\[0.1cm]
    \tfrac{1}{6}(3u+w^2-4w+13), & \hbox{if $w \equiv 5 \mod{6}$;} \\[0.1cm]
    \tfrac{1}{6}(3u+w^2-2w-3), & \hbox{if $w \equiv 0,2 \mod{6}$;} \\[0.1cm]
    \tfrac{1}{6}(3u+w^2-2w+1), & \hbox{if $w \equiv 4 \mod{6}$.}
  \end{array}
\right.$$
\end{lemma}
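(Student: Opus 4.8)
The plan is to construct, for each relevant pair $(u,w)$, an explicit partial Steiner triple system $(U,\mathcal{A})$ whose leave $L$ contains a vertex $v$ with $\deg_L(v)=w$ that lies in no triangle of $L$. Such a vertex rules out every embedding of order less than $u+w$: if $L\vee K_{w'}$ had a triangle decomposition for some $w'<w$ then, by Lemma~\ref{NecConds}, there would be a subgraph $L'$ of $L$ with $\chi'(L')\leq w'$ and with $L-L'$ admitting a triangle decomposition; since $v$ is in no triangle of $L$ it is in no triangle of the subgraph $L-L'$, so in that triangle decomposition no triangle meets $v$ and hence no edge of $L-L'$ meets $v$; therefore $\deg_{L'}(v)=\deg_L(v)=w$, contradicting $\deg_{L'}(v)\leq\Delta(L')\leq\chi'(L')\leq w'<w$. (The lower bound on $|E(L')|$ in Lemma~\ref{NecConds} is not needed.) So it remains only to exhibit such a system with the prescribed number of triples.

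I would take $U=\{v\}\cup N\cup P$ with $v\notin N\cup P$, $|N|=w$ and $|P|=u-1-w$; note that $|P|$ is even because $u+w$ is odd, and $|P|\geq4$ because $w\leq u-5$. The triple set $\mathcal{A}$ is built from three parts. First, fix a maximum triangle packing of $K_N$; its leave $R$ is empty when $w\equiv1,3\pmod{6}$, a perfect matching when $w\equiv0,2\pmod{6}$, the union of $\tfrac{w-4}{2}$ independent edges and a copy of $K_{1,3}$ when $w\equiv4\pmod{6}$, and a $4$-cycle when $w\equiv5\pmod{6}$. Second, cover each edge $xy$ of $R$ by a triple $\{x,y,p\}$ with $p\in P$; since every vertex of $P$ is adjacent to all of $N$, the edges of a matching may all be routed through a single vertex of $P$, so in each case this can be done using at most three vertices of $P$ and with no pair used twice (and with no triple meeting $v$). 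After these two parts, every pair inside $N$ lies in exactly one triple, so $N$ is an independent set in the leave. Third, add a near-parallel class through $v$: choose a perfect matching of $P$ and, for each edge $e$ of it, add the triple $\{v\}\cup e$; this covers every pair $\{v,p\}$ with $p\in P$ but no pair $\{v,n\}$ with $n\in N$. One checks directly that $(U,\mathcal{A})$ is then a partial Steiner triple system, and that in its leave $\Nbd_L(v)=N$, so $\deg_L(v)=w$ and $v$ lies in no triangle of $L$.

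The rest is bookkeeping. Counting triples, the three parts contribute $\tfrac13\bigl(\binom{w}{2}-|E(R)|\bigr)$, $|E(R)|$, and $\tfrac12(u-1-w)$ respectively, so $|\mathcal{A}|=\tfrac13\bigl(\binom{w}{2}+2|E(R)|\bigr)+\tfrac12(u-1-w)$; substituting $|E(R)|\in\{0,\tfrac{w}{2},\tfrac{w}{2}+1,4\}$ according to the residue of $w$ modulo $6$ yields exactly the four expressions in the statement. The one step needing a little care — and the main obstacle, such as it is — is checking in the $w\equiv4$ and $w\equiv5$ cases that $R$ can be covered by triples with third vertices in $P$ with no repeated pair: for the $K_{1,3}$ one routes its three edges through three distinct vertices of $P$, and for the $4$-cycle one routes the two pairs of opposite edges through two vertices of $P$; one then checks that $|P|\geq4$ leaves enough room for this together with the near-parallel class. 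Everything else is immediate.
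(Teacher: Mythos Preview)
Your proof is correct and follows essentially the same strategy as the paper: build a partial system whose leave has a vertex $v$ with $\Nbd_L(v)=N$ independent of size $w$, using a near-parallel class through $v$ on $P$ together with a maximum triangle packing on (or near) $N$ plus a few fix-up triples, and then invoke Lemma~\ref{NecConds}. The only noteworthy difference is in the even-$w$ cases: the paper enlarges $S$ to a set $S'$ of size $w+1$ (so that $|S'|\equiv 1,3$ or $5\pmod 6$) and packs $K_{S'}$, whereas you pack $K_N$ directly and handle the perfect-matching or $K_{1,3}$-plus-matching leave; both work, and your version is arguably a little more uniform.
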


\begin{proof} Let $U$ be a set of $u$ vertices, let $a$ be a vertex in $U$ and let $S$ be a subset of $U
\setminus \{a\}$ with $|S|=w$. If $w$ is odd then let $S'=S$, and if $w$ is even then let $S'$ be
a subset of $U \setminus \{a\}$ such that $S \subseteq S'$ and $|S'|=w+1$. Let $\mathcal{A}_1$ be
a triangle decomposition of $K_{\{a\}} \vee M$, where $M$ is a matching with vertex set $U
\setminus (S \cup \{a\})$ (note that $|U \setminus (S \cup \{a\})|=u-w-1$ is even because $u+w$ is
odd). Observe that $|S'| \equiv 1,3 \mod{6}$ if $w \equiv 0,1,2,3 \mod{6}$ and $|S'| \equiv 5
\mod{6}$ if $w \equiv 4,5 \mod{6}$. Thus, using the main result of \cite{Sc}, there is a triangle
packing $\mathcal{A}_2$ of $K_{S'}$, such that if $w \equiv 0,1,2,3 \mod{6}$ then the leave of
$\mathcal{A}_2$ is empty, and if $w \equiv 4,5 \mod{6}$ then the leave of $\mathcal{A}_2$ is a
cycle $C=(c_1,c_2,c_3,c_4)$ of length $4$ such that $S' \setminus S = \{c_1\}$ when $w \equiv 4
\mod{6}$. It follows from $w \leq u-5$ that $|U \setminus (S' \cup \{a\})| \geq 3$. Let $z_1$ and
$z_2$ be distinct vertices in $U \setminus (S' \cup \{a\})$ and let
$$\mathcal{A}_3 =
    \left\{
    \begin{array}{ll}
        \emptyset, & \hbox{if $w \equiv 0,1,2,3 \mod{6}$;} \\
        \{(z_1,c_1,c_2),(z_1,c_3,c_4),(z_2,c_2,c_3),(z_2,c_1,c_4))\}, & \hbox{if $w \equiv 5 \mod{6}$;} \\
        \{(z_1,c_3,c_4),(z_2,c_2,c_3))\}, & \hbox{if $w \equiv 4 \mod{6}$.}
    \end{array}
    \right.$$

Let $\mathcal{A}=\mathcal{A}_1 \cup \mathcal{A}_2 \cup \mathcal{A}_3$ and observe that
$(U,\mathcal{A})$ is a partial Steiner triple system with a leave $L$ such that $|\Nbd_L(a)|=w$
and the subgraph of $L$ induced by $\Nbd_L(a)$ is empty. Thus, it can be deduced from Lemma
\ref{NecConds} that every embedding of $(U,\mathcal{A})$ has order at least $u+w$. In each case,
simple counting shows that $\mathcal{A}$ contains the required number of triples.
\end{proof}

\begin{corollary}\label{LBCor}
For any positive integers $u$ and $t$ such that $\frac{u+1}{2} \leq t < \frac{1}{6}(u^2-5u+16)$,
there is a partial Steiner triple system of order $u$ with at most $t$ triples that does not have
an embedding of order $u+\sqrt{6t-3u}-1$ or smaller.
\end{corollary}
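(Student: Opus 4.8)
The plan is to apply Lemma~\ref{noEmbed} with its parameter $w$ taken as large as possible subject to the constraint that the resulting number of triples is at most $t$. For a positive integer $w \le u-5$ let $\alpha(w)$ denote the value prescribed for $|\mathcal{A}|$ in Lemma~\ref{noEmbed} (one of the four displayed expressions, according to the residue of $w$ modulo $6$); note that $\alpha(w)$ is an integer whenever $u+w$ is odd. We may assume $u \ge 7$, since for $2 \le u \le 6$ no integer lies in $[\tfrac{u+1}{2},\tfrac16(u^2-5u+16))$ (the case $u=1$ being degenerate and disregarded). A direct calculation gives $\alpha(1)=\tfrac{u-2}{2}$, $\alpha(2)=\tfrac{u-1}{2}$, $\alpha(3)=\tfrac{u-2}{2}$, $\alpha(4)=\tfrac{u+3}{2}$, $\alpha(5)=\tfrac{u}{2}+3$, and, more generally, $\alpha(w)\le\tfrac16\big(3u+(w-1)^2\big)$ for every positive integer $w\ne 5$.

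Since $u+w$ must be odd, the least eligible value of $w$ is $1$ (if $u$ is even) or $2$ (if $u$ is odd); in either case $\alpha(w)<\tfrac{u+1}{2}\le t$ and $w\le u-5$, so the set
$$W=\{\,w: w\text{ a positive integer},\ u+w\text{ odd},\ w\le u-5,\ \alpha(w)\le t\,\}$$
is nonempty. Being bounded above by $u-5$, it has a greatest element $w^{\ast}$, and Lemma~\ref{noEmbed} with $w=w^{\ast}$ produces a partial Steiner triple system of order $u$ with at most $t$ triples every embedding of which has order at least $u+w^{\ast}$. As $t\ge\tfrac{u+1}{2}$ forces $6t-3u\ge 3>0$, it now suffices to prove
$$(w^{\ast}+1)^2>6t-3u,$$
for this gives $w^{\ast}+1>\sqrt{6t-3u}$, so every embedding has order at least $u+w^{\ast}>u+\sqrt{6t-3u}-1$ and hence none has order $u+\sqrt{6t-3u}-1$ or smaller.

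To prove the displayed inequality I would argue by cases. If $w^{\ast}=u-5$ then $(w^{\ast}+1)^2=(u-4)^2=u^2-8u+16$, and $(u-4)^2>6t-3u$ is equivalent to $t<\tfrac16(u^2-5u+16)$, which is precisely the hypothesis. If $w^{\ast}<u-5$ then, since $u+w^{\ast}$ is odd, in fact $w^{\ast}\le u-7$, so $w^{\ast}+2$ is eligible for membership in $W$ and exceeds $w^{\ast}$; maximality of $w^{\ast}$ therefore forces $\alpha(w^{\ast}+2)\ge t+1$. If moreover $w^{\ast}+2\ge 6$, then $\alpha(w^{\ast}+2)\le\tfrac16\big(3u+(w^{\ast}+1)^2\big)$ yields $t+1\le\tfrac16\big(3u+(w^{\ast}+1)^2\big)$, whence $(w^{\ast}+1)^2\ge 6t+6-3u>6t-3u$.

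The one remaining point---and the only genuinely delicate one---is the small boundary cases $w^{\ast}+2\in\{3,4,5\}$, where the generic estimate on $\alpha$ is either unavailable ($w^{\ast}+2=5$) or too weak. These are dispatched using the explicit small values above: if $w^{\ast}+2=5$ then $u$ is even and $\tfrac{u}{2}+3=\alpha(5)\ge t+1$ gives $6t-3u\le 12<16=(w^{\ast}+1)^2$; if $w^{\ast}+2=4$ then $u$ is odd and $\tfrac{u+3}{2}=\alpha(4)\ge t+1$ forces $t=\tfrac{u+1}{2}$, so $6t-3u=3<9=(w^{\ast}+1)^2$; and $w^{\ast}+2=3$ cannot occur, as it would require $\tfrac{u-2}{2}=\alpha(3)\ge t+1$, contrary to $t\ge\tfrac{u+1}{2}$. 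This exhausts all cases and completes the proof. In summary, the substance is the ``largest eligible $w$'' device together with Lemma~\ref{noEmbed}; the real work is the bookkeeping confirming that the congruence-dependent correction terms in $\alpha(w)$ never break $(w^{\ast}+1)^2>6t-3u$, which reduces to these few boundary cases.
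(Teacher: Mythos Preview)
Your proof is correct. It uses the same key lemma (Lemma~\ref{noEmbed}) and the same crucial estimate $\alpha(w)\le\tfrac16\big(3u+(w-1)^2\big)$ for $w\neq 5$, together with separate handling of the boundary cases involving $w=5$, just as the paper does.

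The only difference is one of direction: the paper fixes $w$ to be the \emph{smallest} integer exceeding $\sqrt{6t-3u}-1$ with $u+w$ odd and then checks that the triple count $\alpha(w)$ is at most $t$, whereas you fix $w^\ast$ to be the \emph{largest} integer with $\alpha(w^\ast)\le t$ (and the right parity and $w^\ast\le u-5$) and then check that $w^\ast>\sqrt{6t-3u}-1$. These are dual ways of organising the same inequality, and the resulting case analysis (the generic estimate plus the exceptional small values $w\in\{3,4,5\}$) is essentially identical. Your version is perhaps slightly tidier in that the existence of some eligible $w$ is established up front, and the boundary cases are handled a bit more explicitly; neither version gains anything materially over the other.
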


\begin{proof}
Let $w$ be the smallest integer such that $w>\sqrt{6t-3u}-1$ and $u+w$ is odd. Note that
$\frac{u+1}{2} \leq t < \frac{1}{6}(u^2-5u+16)$ implies $0 < \sqrt{6t-3u}-1 < u-5$, and hence that
$1 \leq w \leq u-5$. By Lemma \ref{noEmbed} there is a partial Steiner triple system
$(U,\mathcal{A})$ of order $u$ such that every embedding of $(U,\mathcal{A})$ has order at least
$u+w$ and
$$|\mathcal{A}|= \left\{
  \begin{array}{ll}
    \tfrac{1}{6}(3u+w^2-4w-3), & \hbox{if $w \equiv 1,3 \mod{6}$;} \\[0.1cm]
    \tfrac{1}{6}(3u+w^2-4w+13), & \hbox{if $w \equiv 5 \mod{6}$;} \\[0.1cm]
    \tfrac{1}{6}(3u+w^2-2w-3), & \hbox{if $w \equiv 0,2 \mod{6}$;} \\[0.1cm]
    \tfrac{1}{6}(3u+w^2-2w+1), & \hbox{if $w \equiv 4 \mod{6}$.}
  \end{array}
\right.$$ It only remains to show that $|\mathcal{A}| \leq t$.

When $w \neq 5$ it is easy to check that $|\mathcal{A}| \leq \tfrac{1}{6}(3u+w^2-2w+1)$. Thus,
since $w \leq \sqrt{6t-3u}+1$, it can be seen that $|\mathcal{A}| \leq t$. When $w = 5$, we have
$|\mathcal{A}|=\frac{u+6}{2}$. Further, it follows from the definition of $w$ that $\sqrt{6t-3u}-1
\geq 3$, and hence that $t \geq \frac{u+6}{2}$ (note that $t$ is an integer).
\end{proof}

As mentioned in the introduction, it follows from Corollary \ref{LBCor} that for sufficiently
large values of $u$ there is a partial Steiner triple system of order $u$ with at most
$\frac{u^2}{50}-\frac{11u}{100}-\frac{116}{75}$ triples that does not have an embedding of order
$v$ for any $v < (1.346)u$.

Applying Lemma \ref{noEmbed} with $w=2$ demonstrates that, for each admissible integer $u \geq 7$,
there is a partial Steiner triple system of order $u$ with $\frac{u-1}{2}$ triples that has no
completion. This raises the following question which bears some resemblance to Evans' conjecture
for partial latin squares \cite{Eva} (proved in \cite{Sme}).

\begin{question}
Does every partial Steiner triple system of admissible order $u$ with fewer than $\frac{u-1}{2}$
triples have a completion?
\end{question}


\begin{thebibliography}{99}

\bibitem{AnHiMe} L.D. Andersen, A.J.W Hilton and E. Mendelsohn, Embedding partial Steiner triple
    systems, Proc. London Math. Soc. {\bf 41} (1980), 557--576.

\bibitem{Br1} D. Bryant, A conjecture on small embeddings of partial Steiner triple systems, J.
    Combin. Des. {\bf 10} (2002), 313--321.

\bibitem{Br3uEmbed} D. Bryant, Embeddings of partial Steiner triple systems, J. Combin. Theory Ser.
    A {\bf 106} (2004), 77--108.

\bibitem{BrHoEmbed} D. Bryant and D. Horsley, A proof of Lindner's conjecture on embeddings of
    partial Steiner triple systems, J. Combin. Designs {\bf 17} (2009), 63--89.

\bibitem{BrHoMa} D. Bryant, D. Horsley and B. Maenhaut, Decompositions into $2$-regular subgraphs
    and equitable partial cycle decompositions, J. Combin. Theory Ser. B {\bf 93} (2005),
    67--72.

\bibitem{BrHoSubsys} D. Bryant and D. Horsley, Steiner triple systems with two
    disjoint subsystems, J. Combin. Des. {\bf 14} (2006), no. 1, 14--24.

\bibitem{BrMaQuWe} D. Bryant, B. Maenhaut, K. Quinn and B. S. Webb, Existence and embeddings of
    partial Steiner triple systems of order ten with cubic leaves, Discrete Math. {\bf 284} (2004),
    83--95.

\bibitem{Co} C.J. Colbourn, Embedding partial Steiner triple systems is NP-complete,
    J. Combin. Theory Ser. A {\bf 35} (1983), 100--105.

\bibitem{CoCoRo} C.J. Colbourn, M.J. Colbourn and A. Rosa, Completing small partial triple systems,
    Discrete Math. {\bf 45} (1983), 165--179.

\bibitem{CoRo} C.J. Colbourn and A. Rosa, Triple Systems, Clarendon Press, Oxford (1999).

\bibitem{EliFeiSha} P. Elias, A. Feinstein, and C. Shannon, A note on the maximum
    flow through a network, IRE T. Inform. Theor. {\bf 2} (1956), 117-119.

\bibitem{Eva} T. Evans, Embedding incomplete latin squares, Amer. Math. Monthly {\bf 67} (1960),
    958--961.

\bibitem{ForFul} L.R. Ford and D.R. Fulkerson, Maximal flow through a network, Canad. J.
    Math. {\bf 8} (1956), 399–404.

\bibitem{Ha} P. Hall, On Representatives of Subsets, J. London Math. Soc. {\bf 10} (1935), 26--30.


\bibitem{HoEmbed} D. Horsley, Small embeddings of partial Steiner triple systems, J. Combin. Des.
    (to appear).

\bibitem{Ki} T.P. Kirkman, On a problem in combinations, Cambridge and Dublin Math. J. {\bf 2}
    (1847), 191--204.

\bibitem{Ko} D. K{\"o}nig, {\"U}ber Graphen und ihre Anwendung auf Determinantentheorie und
    Mengenlehre, Math. Ann. {\bf 77} (1916), 453--465.

\bibitem{Li} C.C. Lindner, A partial Steiner triple system of order $n$ can be embedded in a
    Steiner triple system of order $6n+3$, J. Combin. Theory Ser. A {\bf 18} (1975), 349--351.

\bibitem{LiEv} C.C. Lindner and T. Evans, Finite embedding theorems for partial designs and
    algebras, SMS 56, Les Presses de l'Universit\'{e} de Montr\'{e}al, (1977).


\bibitem{Nas} C.St.J.A. Nash-Williams, An unsolved problem concerning decomposition of graphs into
    triangles, Combinatorial Theory and its Applications III, Colloquia Math. Soc. J. Bolyai {\bf 4} (1970), 1179--1181.

\bibitem{Sc} J. Sch{\"o}nheim, On maximal systems of $k$-tuples, Studia Sci. Math. Hungar. {\bf 1}
    (1966), 363--368.

\bibitem{Sme} B. Smetaniuk, A new construction on Latin squares. I. A proof of the Evans
    conjecture, Ars Combin. {\bf 11} (1981), 155--172.

\bibitem{Tr} C. Treash, The completion of finite incomplete Steiner triple systems with
    applications to loop theory. J. Combin. Theory Ser. A {\bf 10} (1971), 259--265.


\bibitem{Vi} V.G. Vizing, On an estimate of the chromatic class of a $p$-graph (in Russian),
    Diskret Analiz {\bf 3} (1964), 25--30.


\end{thebibliography}
\end{document}